\newtheorem{theorem}{Theorem}[section]
\newtheorem{proposition}[theorem]{Proposition}
\newtheorem{corollary}[theorem]{Corollary}
\theoremstyle{definition}
\newtheorem{definition}[theorem]{Definition}
\newtheorem{example}[theorem]{Example}
\theoremstyle{remark}
\newtheorem{remark}[theorem]{Remark}
\numberwithin{equation}{section}
\begin{document}

\setcounter{page}{1}

\title[FP on PMTs]{Fixed points on partial metric type spaces }

\author[Ya\'e Ulrich Gaba]{Ya\'e Ulrich Gaba$^{1,2,3,*}$}



\address{$^{1}$ Institut de Math\'ematiques et de Sciences Physiques (IMSP), 01 BP 613 Porto-Novo, B\'enin.}


\address{$^{2}$ Department of Mathematical Sciences, North West University, Private Bag
	X2046, Mmabatho 2735, South Africa.}

\address{$^{3}$ African Center for Advanced Studies (ACAS),
	P.O. Box 4477, Yaounde, Cameroon.}
\email{\textcolor[rgb]{0.00,0.00,0.84}{gabayae2@gmail.com
}}

\subjclass[2010]{Primary 47H05; Secondary 47H09, 47H10.}

\keywords{partial metric type; fixed point. }


\begin{abstract}

	In this paper, we study some new fixed point results for self maps defined on partial
	metric type spaces.  In particular, we give common fixed point theorems in the same setting. Some examples are given
	which illustrate the results.

\end{abstract} 

\maketitle

\section{Introduction and preliminaries}

In the 1920s, Banach proposed and proved his famous Contraction Principle. Over the years, many researchers have proposed new concepts of contraction mapping as well as the new fixed point theorems they induce. In 1994, Matthews\cite{mat} introduced the concept of partial metric space and
proved the Banach Contraction Principle in these spaces. Partial metrics generalize the concept of a metric space and are useful in modelling partially defined
information, which often appears in computer science. The particularity of these spaces is the property that the self-distance of any point of the space may not be zero. More fixed point theory of partial metric
space have been proposed, for example, see \cite{h,sal}. In 2014, S. Satish introduced the concept of partial
$b$-metric space \cite{shu}, and the fixed point theorem of Banach Contraction Principle and Kannan type mapping were proved in partial $b$-metric space. Another generalization of metric spaces, namely metric type spaces, has been discussed by Gaba et al. \cite{gab,niyi-gaba} and here we shall establish a connection between these spaces and the so-called partial metric type spaces, that we introduce in the coming lines. In this paper, we also prove some common fixed point theorem for contractive mappings in partial metric type space which generalize and extend the result of S. Satish et al. \cite{shu3,shu,shu2} (in particular). In this line, an analog of the Banach contraction principle as well as the Kannan type fixed point theorem in partial $b$-metric spaces are given. Some examples are given
which illustrate the results.

First, we introduce the definition of $K$-partial metric spaces.

\begin{definition} (Compare \cite[Definition 3.]{shu})
	A partial metric type on a set $X$ is a function $p: X \times X \to [0, \infty)$ such that:
	\begin{enumerate}
		\item[(pm1)] $x = y$ iff $(p(x, x) = p(x, y)=p(y,y)$ whenever $x, y \in X$,
		\item[(pm2)] $p(x, x)\leq p(x, y)$ whenever $x, y \in X$,
\item[(pm3)] $p(x, y) = p(y, x);$ whenever $x, y \in X$,

		\item[(pm4)] there exists a real number $K \geq 1$ such that $$p(x, y) + p(z_1,z_1)+p(z_2,z_2)+\cdots+p(z_n,z_n) \leq K[ p(x,z_1)+p(z_1,z_2)+\cdots+p(z_n,y)]$$ 
		for any points $x,y,z_i\in X,\ i=1,2,\ldots, n$ where $n\geq 1$ is a fixed natural number .
	\end{enumerate}

	The triplet $(X, p,K)$ is called a $K$-partial metric space ( or $K$-PMS for short). The number $K$ is
	called the coefficient of $(X, p, K)$.
	
\end{definition}
In the axiom (pm4), if we take $n=1$, we recover the definition of a partial $b$-metric space (in the sense of Satish). Also, for $n=2,K=1$, we recover the definition of a partial rectangular metric spaces in the sense of \cite[Definition 3.]{shu2}. Hence partial metric type spaces comprise both partial $b$-metric spaces and partial rectangular metric spaces.

\begin{remark}
It is clear that, if $p ( x , y ) = 0$ , then, from (pm1) and (pm2), $x = y$. But if $x = y$, $p ( x , y )$ may not be 0. A basic example of a $K$-PMS is the triplet $(\mathbb{R}^+ , p,K )$ where $p(x, y) = [\max\{x, y\}]^s + |x - y|^s$ for all $x, y \in \mathbb{R}^+$, $s>1$. It is not difficult to see that therefore, $p$ is a $K$-PMS with $K=2^s >1,$ but not a partial metric on $\mathbb{R}^+$.
Other examples of $K$-PMS can be constructed, in particular, Satish \cite[Proposition 1., Proposition 2.]{shu} describes such a construction. More precisely, he proved that it is enough to add a partial metric and a $b$-metric in order to obtain a partial $b$-metric. Also, a power $q\geq 1$ of a partial metric is a $K$-PMS.
\end{remark}

Now, we define Cauchy sequence and convergent sequence in partial metric type spaces (see \cite{shu}).

Every partial $K$-PMS $p$ on a nonempty set $X$ generates a topology $\tau_p$ on $X$ whose base is the family of open $p$-balls $B_p(x,\varepsilon)$ where $$\tau_b = \left\lbrace B_p(x,\varepsilon), x\in X, \varepsilon>0  \right\rbrace$$
and $$B_p(x,\varepsilon) = \{y \in X : b(x, y) < \varepsilon + b(x, x)\}.$$

The topological space $(X, \tau_b )$ is $T_0$, but not $T_1$ in general, talk less of being $T_2$, hence limit of convergent sequence
may not be unique.
\begin{definition}
	Let $(X, p,K)$ be a $K$-PMS and $(x_n)_{n\geq 1}$ be any sequence in $X$ and $x \in X$. Then:
	
	\begin{enumerate}
		\item The sequence $(x_n)_{n\geq 1}$ is said to be convergent with respect to $\tau(p)$ (or $\tau(p)$-convergent) and converges to $x$, if $\lim\limits_{n\to \infty} p(x_n , x) = p(x, x)$. Alternatively, we could also write $x_n \overset{p}{\longrightarrow} x .$

		
		\item The sequence $(x_n)_{n\geq 1}$ is said to be $\tau(p)$-Cauchy (or just Cauchy) if
		$$\lim\limits_{n,m\to \infty} p(x_n , x_m)$$ exists and is finite.
		
		
		
		\item $(X, p,K)$ is said to be $\tau(p)$-complete (or just complete) if for every $\tau(p)$-Cauchy
		sequence $(x_n)_{n\geq 1} \subseteq X$, there exists $x \in X$ such that:
		
		$$ \lim\limits_{n,m\to \infty} p(x_n , x_m)= \lim\limits_{n\to \infty} p(x_n , x)=p(x,x).$$


		

	\end{enumerate}
	
\end{definition}

We give these additional definitions, useful to characterize some specific complete partial metric type spaces.
\begin{definition}
	Let $(X, p,K)$ be a $K$-PMS and $(x_n)_{n\geq 1}$ be any sequence in $X$ and $x \in X$. Then:
	
	\begin{enumerate}
		\item The sequence $(x_n)_{n\geq 1}$ is called $0$-Cauchy if
		$$\lim\limits_{n,m\to \infty} p(x_n , x_m)=0.$$ 
		
		\item $(X, p,K)$ is called $0$-complete  if for every $0$-Cauchy
		sequence $(x_n)_{n\geq 1} \subseteq X$, there exists $x \in X$ such that:
		
		$$ \lim\limits_{n,m\to \infty} p(x_n , x_m)= \lim\limits_{n\to \infty} p(x_n , x)=p(x,x)=0.$$
		
		\end{enumerate}
\end{definition}

\begin{remark}
It is straightforward from the definition that  if the partial metric type $(X,p,K)$ is complete, then it is $0$-complete. The following example shows that the converse need not hold.
\end{remark}

\begin{example}
	Let $X=(0,1)$ and $p(x,y)=|x-y|^2+2$ for all $x,y \in X$. The $(X,p,2)$ is a $0$-complete partial metric space which fails to be complete. Indeed, since $p(x,y) \geq 2$ for all $x,y \in X$, $(X,p,K)$
	has no $0$-Cauchy, this proves that $(X,p)$ is $0$-complete. By 
	$$\lim\limits_{n,m\to \infty}p\left( \frac{1}{2m},\frac{1}{2n}\right)= \lim\limits_{n,m\to \infty} \left| \frac{1}{2m}-\frac{1}{2n} \right|^2+2 = 2,$$
	we conclude that the sequence $\left(\frac{1}{2n}\right)_{n\geq 1}$ is a Cauchy sequence in $(X,p,2)$. If we assume that there is $x^* \in X$ such that $x_n \overset{p}{\longrightarrow} x^*$, then 
	
	$$\lim\limits_{m\to \infty}p\left( \frac{1}{2m},x^*\right)= \lim\limits_{m\to \infty} \left| \frac{1}{2m}-x^* \right|^2+2 =p(x^*,x^*)=2.$$
	It implies that $x^*= 0 \in X$ --a contradiction. Hence $\left(\frac{1}{2n}\right)_{n\geq 1}$ is not convergent in $(X,p,2)$ which is therefore not complete. 
\end{example}

Next, we recall the definition of a metric type space.

\begin{definition}(See \cite[Definition 1.1]{gab})
	Let $X$ be a nonempty set, and let the function $D:X\times X \to [0,\infty)$ satisfy the following properties:
	\begin{itemize}
		\item[(D1)] $D(x,x)=0$ for any $x \in X$;
		\item[(D2)] $D(x,y)=D(y,x)$ for any $x,y\in X$;
		\item[(D3)] $D(x,y) \leq K \big( D(x,z_1)+D(z_1,z_2)+\cdots+D(z_n,y) \big)$ for any points $x,y,z_i\in X,\ i=1,2,\ldots, n$ where $n\geq 1$ is a fixed natural number and some constant $K\geq 1$.
	\end{itemize}
	The triplet $(X,D,K)$ is called a \textbf{metric type space}.
\end{definition}
The class of metric type spaces is strictly larger than that of metric spaces (see \cite{niyi-gaba}). The concepts of Cauchy sequence, convergence for a sequence and completeness in a metric type space are defined in the same way as defined for a metric space.

In the axiom (D3), if we take $n=1$, we recover the definition of a $b$-metric space and for $n=2$, we recover the definition of a rectangular $b$-metric space (see \cite[Definition 1.3]{shu3}). Hence metric type spaces comprise both $b$-metric spaces and rectangular metric spaces.


\section{Some First Results}

We begin by showing that that every partial metric type induces a metric type.

\begin{proposition}\label{prop1}
	For each partial metric type space $(X,p,1)$, the triplet $(X,p^t,1)$\footnote{This is actually a classical metric space.} is a metric type space, where 
	
	$$ p^t(x,y) = 2 p(x,y) -p(x,x)-p(y,y) \quad \forall x,y\in X.$$
\end{proposition}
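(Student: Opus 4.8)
The plan is to verify the three axioms (D1), (D2), (D3) of a metric type space for the function $p^t$, keeping the same natural number $n$ and the same coefficient $K=1$ that come with $(X,p,1)$; along the way one should also confirm that $p^t$ really takes its values in $[0,\infty)$.

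The easy parts come first. For nonnegativity, (pm2) gives $p(x,x)\leq p(x,y)$, and (pm2) together with (pm3) gives $p(y,y)\leq p(y,x)=p(x,y)$, so $p(x,x)+p(y,y)\leq 2p(x,y)$ and hence $p^t(x,y)\geq 0$. Axiom (D1) is immediate, since $p^t(x,x)=2p(x,x)-p(x,x)-p(x,x)=0$. Axiom (D2) follows at once from (pm3): $p^t(x,y)=2p(x,y)-p(x,x)-p(y,y)=2p(y,x)-p(y,y)-p(x,x)=p^t(y,x)$.

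The substance is (D3). Fix $x,y$ and intermediate points $z_1,\ldots,z_n\in X$, and to lighten notation write $z_0:=x$ and $z_{n+1}:=y$. Expanding the definition of $p^t$,
$$\sum_{i=0}^{n}p^t(z_i,z_{i+1})=2\sum_{i=0}^{n}p(z_i,z_{i+1})-\sum_{i=0}^{n}p(z_i,z_i)-\sum_{i=0}^{n}p(z_{i+1},z_{i+1}).$$
Re-indexing the last sum and combining the two sums of self-distances, the endpoint terms $p(x,x)$ and $p(y,y)$ each occur once while every intermediate self-distance $p(z_i,z_i)$ with $1\leq i\leq n$ occurs twice, so
$$\sum_{i=0}^{n}p^t(z_i,z_{i+1})=2\sum_{i=0}^{n}p(z_i,z_{i+1})-p(x,x)-p(y,y)-2\sum_{i=1}^{n}p(z_i,z_i).$$
Subtracting $p^t(x,y)=2p(x,y)-p(x,x)-p(y,y)$, the terms $p(x,x)$ and $p(y,y)$ cancel, and the inequality $p^t(x,y)\leq\sum_{i=0}^{n}p^t(z_i,z_{i+1})$ is seen, after dividing by $2$, to be exactly
$$p(x,y)+p(z_1,z_1)+\cdots+p(z_n,z_n)\leq p(x,z_1)+p(z_1,z_2)+\cdots+p(z_n,y),$$
which is axiom (pm4) specialized to $K=1$. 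Hence (D3) holds and $(X,p^t,1)$ is a metric type space.

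I do not expect a genuine obstacle here: the only delicate point is the bookkeeping of the self-distance terms, namely that each intermediate $p(z_i,z_i)$ is produced twice by the telescoping sum (once as the right endpoint of the segment $z_{i-1}z_i$ and once as the left endpoint of $z_iz_{i+1}$), and that this factor of $2$ is precisely what (pm4) supplies once the target inequality has been divided by $2$. It is worth noting in passing that for $n=1$ this reproduces the classical Matthews metric associated to a partial metric, in which case (D3) with $K=1$ is the ordinary triangle inequality, consistent with the footnote.
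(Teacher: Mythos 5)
Your proof is correct and follows essentially the same route as the paper's: both reduce (D3) to the $K=1$ case of (pm4) by exploiting that each intermediate self-distance $p(z_i,z_i)$ appears twice when the segment values $p^t(z_{i},z_{i+1})$ are summed, which exactly matches the factor of $2$ obtained from doubling (pm4). Your write-up is a bit more explicit about this bookkeeping (and about nonnegativity of $p^t$), but the underlying argument is identical.
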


\begin{proof}
	By definition of $p$ and $p^t$, it is easy to verify that $p^t$ satisfies the properties (D1) and (D2). For (D3), let $z_i\in X,\ i=1,2,\ldots, n$ be $n$ points.
	\begin{align*}
		p^t(x,y) &= 2 p(x,y) -p(x,x)-p(y,y)  \\
		         &\leq -2(p(z_1,z_1)+p(z_2,z_2)+\cdots+p(z_n,z_n)) \\
		         & +2 [ p(x,z_1)+p(z_1,z_2)+\cdots+p(z_n,y)]-p(x,x)-p(y,y)\\
		         & \leq 2 p(x,z_1) -p(x,x)-p(z_1,z_1) + 2p(z_1,z_2)-p(z_1,z_1,)-p(z_2,z_2)+\cdots \\
		         & + \cdots + 2p(z_n,y)-p(z_n,z_n)-p(y,y)\\
		         & \leq p^t(x,z_1)+ p^t(z_1,z_2)+\cdots p^t(z_n,y).
	\end{align*}
	Thus $(X,p^t,1)$ is a metric type space.
\end{proof}

The next proposition provides us with the converse of Proposition \ref{prop1}.

\begin{proposition}
Let $(X,d,1)$ be a metric type space. Whenever $x,y \in X, x\neq$ assume that there exists $x_0 \in X$ such that  $d(x_0,x) \leq d(x,y)$. Then, the triplet $(X,p,1)$ is a partial metric space type, where 
$$p(x,y) = \frac{1}{2}[d(x,y)+ d(x,x_0)+d(y,x_0)]\quad \forall x,y\in X.$$
\end{proposition}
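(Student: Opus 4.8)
The plan is to verify the three axioms (pm1)--(pm4) for the proposed function $p$, treating (pm4) with $K=1$ (i.e.\ the "rectangular-type polygonal inequality" with constant $1$) as the substantive case. First I would check symmetry (pm3), which is immediate since $d$ is symmetric and the formula for $p(x,y)$ is manifestly symmetric in $x$ and $y$. Next I would handle (pm2): since $d(x,x)=0$ by (D1), we have $p(x,x)=\frac12[0+d(x,x_0)+d(x,x_0)]=d(x,x_0)$, and $p(x,y)=\frac12[d(x,y)+d(x,x_0)+d(y,x_0)]\ge \frac12[d(x,x_0)+d(y,x_0)]\ge \frac12[d(x,x_0)+d(x,x_0)]$? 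That last step is not automatic — I would instead argue $p(x,y)-p(x,x)=\frac12[d(x,y)+d(y,x_0)-d(x,x_0)]\ge 0$, which follows from the polygonal inequality $d(x,x_0)\le K(d(x,y)+d(y,x_0))$ only when $K=1$; this is exactly why the hypothesis $K=1$ is needed. For (pm1), the forward direction $x=y\Rightarrow p(x,x)=p(x,y)=p(y,y)$ is trivial; for the converse, from $p(x,y)=p(x,x)$ one gets $d(x,y)+d(y,x_0)-d(x,x_0)=0$ and from $p(x,y)=p(y,y)$ one gets $d(x,y)+d(x,x_0)-d(y,x_0)=0$; adding these forces $d(x,y)=0$, hence $x=y$ (using that $D$, being a metric type, separates points — which should be noted or assumed, since (D1)--(D3) alone with $K=1$ give a genuine metric).

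For (pm4) with $K=1$, I would start from the identity $p(x,y)=\frac12 d(x,y)+\frac12 d(x,x_0)+\frac12 d(y,x_0)$ and, recalling $p(z_i,z_i)=d(z_i,x_0)$, rewrite the desired inequality
\[
p(x,y)+\sum_{i=1}^n p(z_i,z_i)\;\le\;\sum_{\text{chain}} p(\cdot,\cdot)
\]
in terms of $d$. The right-hand side $p(x,z_1)+p(z_1,z_2)+\cdots+p(z_n,y)$ expands to $\tfrac12\big[d(x,z_1)+d(z_1,z_2)+\cdots+d(z_n,y)\big] + \tfrac12\big[d(x,x_0)+2d(z_1,x_0)+\cdots+2d(z_n,x_0)+d(y,x_0)\big]$, while the left-hand side is $\tfrac12 d(x,y)+\tfrac12 d(x,x_0)+\tfrac12 d(y,x_0)+\sum_{i=1}^n d(z_i,x_0)$. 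After cancelling the common terms $\tfrac12 d(x,x_0)$, $\tfrac12 d(y,x_0)$, and $\sum_i d(z_i,x_0)$ from both sides, the inequality reduces precisely to $d(x,y)\le d(x,z_1)+d(z_1,z_2)+\cdots+d(z_n,y)$, which is axiom (D3) with $K=1$. This is the clean heart of the argument, and it explains the factor $\tfrac12$ in the definition.

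The main obstacle I anticipate is not (pm4) — which collapses neatly — but rather (pm2) and the converse of (pm1), both of which genuinely require the auxiliary hypothesis and the value $K=1$. Specifically, the hypothesis "for all $x\neq y$ there exists $x_0\in X$ with $d(x_0,x)\le d(x,y)$" is what I would use, combined with the triangle inequality $d(x,x_0)\le d(x,y)+d(y,x_0)$, to get $p(x,x)=d(x,x_0)\le d(x,y)+d(y,x_0)$, and hence $p(x,x)\le p(x,y)+\big(\tfrac12 d(y,x_0)-\tfrac12 d(x,y)\big)$; one must then check this correction term is controlled so that $p(x,x)\le p(x,y)$ holds, which may need a slightly more careful use of the stated condition $d(x_0,x)\le d(x,y)$ together with symmetry. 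I would also flag that the choice of $x_0$ may depend on the pair $(x,y)$, so some care is needed to confirm $p$ is well defined as a single function on $X\times X$; if the intended reading is that one fixed $x_0$ works for all pairs (a base point), the argument simplifies and I would state that reading explicitly at the start of the proof.
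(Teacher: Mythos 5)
Your proof is correct and follows essentially the same route as the paper: symmetry and $p(x,x)=d(x,x_0)$ are read off the definition, (pm1) is settled by the same cancellation forcing $d(x,y)=0$, and (pm4) reduces, after expanding the chain $p(x,z_1)+\cdots+p(z_n,y)$ and cancelling the $d(z_i,x_0)$ and the $\sum_i p(z_i,z_i)$ terms, to (D3) with $K=1$ --- which is exactly the paper's computation written as a chain of inequalities. The hedging in your final paragraph is unnecessary, since your earlier observation that $p(x,y)-p(x,x)=\tfrac12\left[d(x,y)+d(y,x_0)-d(x,x_0)\right]\ge 0$ already disposes of (pm2) via the triangle inequality alone; and your two caveats --- that $x_0$ must be read as a fixed base point for $p$ to be well defined, and that concluding $x=y$ from $d(x,y)=0$ uses a separation property not actually listed in (D1)--(D3) --- are legitimate issues that the paper's own proof silently glosses over.
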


\begin{proof}
	Axioms (pm2) and (pm3) follow by the definition of $p$. For (pm1), $p(x,x) = d(x,x_0)$ for all $x\in X,$ hence for all $x,y\in X$, $ \frac{1}{2}[d(x,y)+ d(x,x_0)+d(y,x_0)] = d(x,x_0) = d(y,x_0)$, i.e. $d(x,y)=0$ and it follows that $x=y.$ For property (pm4), let $z_i\in X,\ i=1,2,\ldots, n$ be $n$ points. By definition of $p$, we have
	
	\begin{align*}
		p(x,y) & = \frac{1}{2}[d(x,y)+ d(x,x_0)+d(y,x_0)]\\
		       & \leq \frac{1}{2}[ d(x,z_1)+d(z_1,z_2)+\cdots+d(z_n,y)+ d(x,x_0)+d(y,x_0) ] \\
		       & \leq \frac{1}{2}[d(x,z_1) + d(x,x_0)+ d(z_1,x_0) + \cdots +\\
		       & + d(z_n,y)+d(z_n,x_0)+d(y,x_0)]-d(z_1,x_0)-\cdots \\
		       & - d(z_n,x_0)\\
		       & = p(x,z_1)+p(z_1,z_2)+\cdots + p(z_n,y)- \sum_{i=1}^{n}p(z_i,z_i).
	\end{align*}
	Thus $(X,p,1)$ is a partial metric type space.
\end{proof}

%
%
%
%
%

The following result outlines a equivalence between the $0$-completeness of a $K$-PMS and the completeness of a metric type space.

\begin{proposition}
Let $(X,p,K)$
be a partial metric type space and $d_p(x,y) = 0$ if $x=y$ and $d_p(x,y)=p(x,y)$ if $x\neq y.$ Then we have

\begin{enumerate}
	\item $d_p$
	is a metric type on $X.$
	
	\item The partial metric type space $(X,p,K)$
	is $0$-complete if and only the metric type space
	$(X,d_p,K)$
	is complete.
\end{enumerate}
\end{proposition}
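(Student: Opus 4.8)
The plan is to verify the two claims in turn, treating part (1) as a warm-up for the real work in part (2).

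\textbf{Part (1).} For the metric-type axioms on $d_p$, properties (D1) and (D2) are immediate: $d_p(x,x)=0$ by definition, and symmetry of $d_p$ follows from (pm3). For (D3), fix points $x,y,z_1,\dots,z_n\in X$ and distinguish cases according to which of $x,y,z_i$ coincide. The key inequality to exploit is the one buried in (pm4): rearranging
$$p(x,y)+\sum_{i=1}^{n}p(z_i,z_i)\le K\Big[p(x,z_1)+p(z_1,z_2)+\cdots+p(z_n,y)\Big]$$
together with the self-distance lower bounds $p(z_i,z_i)\ge 0$ from (pm1)--(pm2). When $x\ne y$, we have $d_p(x,y)=p(x,y)\le K\sum(\text{consecutive }p\text{-terms})$, and each consecutive term $p(z_{i-1},z_i)$ is at least $d_p(z_{i-1},z_i)$ (they are equal if the endpoints differ and $p\ge 0\ge$ nothing if they agree — but one must check the chain does not collapse). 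The only subtle case is when $x=y$: then $d_p(x,y)=0$ and the inequality holds trivially. So the argument is a short case analysis; no genuine obstacle here.

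\textbf{Part (2).} For the equivalence of completeness, the crucial observation is that a sequence $(x_n)$ is a $0$-Cauchy sequence in $(X,p,K)$ if and only if it is a Cauchy sequence in $(X,d_p,K)$, and likewise $x_n\overset{p}{\longrightarrow}x$ with $p(x,x)=0$ if and only if $d_p(x_n,x)\to 0$. For the forward direction of these equivalences one uses that $p(x_n,x_m)\to 0$ forces, via (pm2), $p(x_n,x_n)\to 0$ and $p(x_m,x_m)\to 0$, so that eventually either $x_n=x_m$ (contributing $0$ to $d_p$) or $d_p(x_n,x_m)=p(x_n,x_m)$; in both cases $d_p(x_n,x_m)\to 0$. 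Conversely, if $d_p(x_n,x_m)\to 0$, then for indices where $x_n\ne x_m$ we get $p(x_n,x_m)=d_p(x_n,x_m)\to 0$, while for $x_n=x_m$ we must separately argue $p(x_n,x_n)\to 0$; this last point is where one needs care, and the trick is to insert a third index and apply (pm4) with a single intermediate point, $p(x_n,x_n)\le p(x_n,x_n)+p(x_k,x_k)\le K[p(x_n,x_k)+p(x_k,x_n)]=2Kp(x_n,x_k)$, which tends to $0$. Given these two equivalences, the biconditional on completeness follows by unwinding definitions: a $0$-Cauchy sequence in $(X,p,K)$ is $d_p$-Cauchy, its $d_p$-limit $x$ satisfies $p(x_n,x)\to 0$ and (again by the intermediate-point estimate) $p(x,x)=0$, giving $0$-completeness from $d_p$-completeness, and symmetrically for the reverse implication.

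\textbf{Main obstacle.} The one place demanding genuine attention is controlling the self-distances $p(x_n,x_n)$ — these are not visible to $d_p$ at all, yet the definition of $0$-completeness insists that $p(x,x)=0$ in the limit. The remedy is uniform: every time a self-distance must be bounded, feed it through (pm4) with $n=1$ and one auxiliary point to convert $p(x_n,x_n)$ into a multiple of some cross-distance $p(x_n,x_k)$ that is already known to be small. Once this lemma-in-passing is isolated, the rest of part (2) is bookkeeping.
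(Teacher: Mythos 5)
Your part (2) is essentially the paper's argument. The paper simply declares ``we may assume $x_n\neq x_m$ for $n\neq m$'' and works only with pairwise-distinct terms, whereas you confront the coincidences head-on by bounding self-distances; note that (pm2) already gives $p(x_n,x_n)\le p(x_n,x_k)$ directly, so your detour through (pm4) with an auxiliary point is correct but unnecessary (the paper uses exactly the (pm2) bound in the step $0\le p(x,x)\le\lim_n p(x,x_n)=0$). One caveat: your claimed equivalence ``$(x_n)$ is $d_p$-Cauchy iff it is $0$-Cauchy for $p$'' is false as stated. A sequence that is eventually constant at a point $c$ with $p(c,c)>0$ is $d_p$-Cauchy but satisfies $p(x_n,x_m)=p(c,c)\not\to 0$, and your third-index trick has no index $k$ with $x_k\neq x_n$ to feed it. This does not endanger the completeness biconditional, since such a sequence converges in $(X,d_p,K)$ trivially, but that degenerate case must be split off explicitly rather than absorbed into the equivalence; the paper's ``we may assume'' is an implicit (and equally unjustified) version of the same carve-out.

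Part (1) as written contains a non sequitur. From (pm4) you get $d_p(x,y)=p(x,y)\le K\sum p(\cdot,\cdot)$ over the chain, and you then observe that each $p(z_{i-1},z_i)$ is \emph{at least} $d_p(z_{i-1},z_i)$ --- but that inequality points the wrong way: it gives $K\sum p\ge K\sum d_p$, which cannot be chained with $d_p(x,y)\le K\sum p$ to produce the required $d_p(x,y)\le K\sum d_p$. The discrepancy $\sum p-\sum d_p$ is precisely the sum of self-distances $p(z,z)$ at coincident consecutive chain points, and the only resource for absorbing it is the $\sum_i p(z_i,z_i)$ appearing on the \emph{left} of (pm4). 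For a single interior duplicate $z_1=z_2=z$ this leaves an excess of $(K-2)p(z,z)$, which is harmless only when $K\le 2$; in general you must collapse the repeated points out of the chain, which is exactly the issue you flag (``one must check the chain does not collapse'') but do not resolve, and collapsing shortens the chain below the fixed length $n$ for which (pm4) is postulated. The paper gives no proof of part (1) at all (``it is easy to see''), so you are not worse off than the source, but the step you wrote down would not survive scrutiny and the genuine content of (D3) for $d_p$ lies precisely there.
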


\begin{proof}
\begin{enumerate}
	\item It is easy to see that $d_p$
	is a metric type on $X.$
	
	\item Suppose that the partial metric type space $(X,p,K)$ is $0$-complete. For each Cauchy sequence in the metric space $(X,d_p,K)$, we
	may assume that $x_n \neq x_m$ for $n\neq m \in \mathbb{N}$. Then we have
	
	$$\lim\limits_{n,m \to \infty} p(x_n,x_m) = d_p(x_n,x_m)=0. $$
	This proves that $(x_n)_{n\geq 1}$
	is a $0$-Cauchy sequence in the $0$-complete partial
	metric type space $(X,p,K)$. Then there exists $x \in X$ such that
	$$ \lim\limits_{n,m\to \infty} p(x_n , x_m)= \lim\limits_{n\to \infty} p(x_n , x)=p(x,x)=0.$$
	
	It implies that
	$$ \lim\limits_{n\to \infty} p(x_n , x)=\lim\limits_{n\to \infty} d_p(x_n , x)=0.$$
This proves that $\lim\limits_{n\to \infty} x_n = x$
in the metric type space $(X,d_p,K)$, which is therefore complete.	
	
	Conversely, suppose that the metric type space $(X,d_p,K)$ complete. For each $0$-Cauchy sequence  $(x_n)_{n\geq 1}$ in the partial metric type $(X,p,K)$, we may assume that $x_n \neq x_m$ for $n\neq m \in \mathbb{N}$. We have $\lim\limits_{n,m\to \infty} p(x_n , x_m)=0$. This implies 
	$$ \lim\limits_{n,m\to \infty} d_p(x_n , x_m)=\lim\limits_{n,m\to \infty} p(x_n , x_m)=0.$$
	
	Thus $(x_n)_{n\geq 1}$ is a Cauchy sequence in the complete metric type space $(X,d_p,K)$. Then there exists
	$x\in X$ such that $\lim\limits_{n\to \infty} d_p(x_n , x)=0$. It implies $\lim\limits_{n\to \infty} p(x_n , x)=0$. Also, we have
	$$0\leq p(x,x)\leq \lim\limits_{n\to \infty} p(x,x_n)=0.$$
	
	Therefore 
	$$ \lim\limits_{n,m\to \infty} p(x_n , x_m)= \lim\limits_{n\to \infty} p(x_n , x)=p(x,x)=0.$$
	Then the partial metric type space 
	$(X,p,K)$
	is $0$-complete.
\end{enumerate}
\end{proof}


We begin by proving some fixed point theorems for a pair of contractive maps.

\begin{theorem}\label{theorem1}
	Let $(X,p,K)$  be a complete $K$-PMS and assume that the mappings $T_1,T_2: X \to X$ satisfy the contractive condition
	\[    
	p(T_1x,T_2y) \leq k p(x,y) \quad \text{ for all } x,y\in X,
	\]
	where $k\in [0,1)$ is a constant. Then $T_1$ and $T_2$ have a unique common fixed point $x^*\in X$ and for any $x\in X,$ the orbits $\{T_1^{2n+1}x, \ n\geq 1 \}$ and $\{T_2^{2n+2}x,\ n\geq 1 \}$ converge to the fixed point.
\end{theorem}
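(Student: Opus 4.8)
The plan is to construct a Picard-type iteration alternating the two maps and show it is Cauchy.  Fix an arbitrary $x_0 \in X$ and define $x_1 = T_1 x_0$, $x_2 = T_2 x_1$, and in general $x_{2n+1} = T_1 x_{2n}$, $x_{2n+2} = T_2 x_{2n+1}$.  The contractive condition applied with $x = x_{2n}$, $y = x_{2n+1}$ gives $p(x_{2n+1}, x_{2n+2}) = p(T_1 x_{2n}, T_2 x_{2n+1}) \le k\, p(x_{2n}, x_{2n+1})$, and symmetrically, using (pm3) to write $p(x_{2n}, x_{2n+1}) = p(T_1 x_{2n}, T_2 x_{2n-1}) \le k\, p(x_{2n-1}, x_{2n})$.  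Hence $p(x_n, x_{n+1}) \le k^n p(x_0, x_1)$ for all $n$, so consecutive distances shrink geometrically.

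Next I would show $(x_n)$ is Cauchy.  This is the step I expect to be the main obstacle, because (pm4) is a ``polygonal'' inequality with a fixed number $n$ of intermediate points and it carries the extra self-distance terms $p(z_i,z_i)$ on the left — so the usual telescoping trick for $b$-metrics does not transfer verbatim.  The cleanest route is to pass to the associated metric type space: by Proposition~\ref{prop1}-style reasoning (or directly), consider the relation between $p$ and a genuine metric type $d$, or more simply estimate $p(x_n, x_m)$ for $m > n$ by applying (pm4) with the $N$ intermediate points taken among $x_{n+1}, \dots, x_{m-1}$ (splitting the index gap into blocks of the appropriate fixed length $N$ dictated by the axiom), dropping the nonnegative self-distance terms on the left, and bounding each resulting $p(x_j, x_{j+1})$ by $k^j p(x_0,x_1)$.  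Summing the geometric tails and using $K \ge 1$ as a fixed multiplicative constant yields $p(x_n, x_m) \to 0$ as $n, m \to \infty$, so in particular $\lim_{n,m} p(x_n,x_m)$ exists and equals $0$, and $(x_n)$ is Cauchy in the sense of the paper.

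By completeness of $(X,p,K)$ there is $x^* \in X$ with $\lim_{n,m} p(x_n,x_m) = \lim_n p(x_n, x^*) = p(x^*,x^*)$; combined with the previous paragraph this forces $p(x^*,x^*) = 0$ and $\lim_n p(x_n, x^*) = 0$.  I then check $x^*$ is a common fixed point: estimate $p(T_1 x^*, x^*)$ using (pm4) with a single well-chosen intermediate point, say $p(T_1 x^*, x^*) \le K[p(T_1 x^*, x_{2n+2}) + \text{(correction)}] = K[p(T_1 x^*, T_2 x_{2n+1}) + \cdots] \le K[k\, p(x^*, x_{2n+1}) + \cdots]$, where the extra terms vanish as $n \to \infty$ since they are self-distances of the convergent iterates; letting $n \to \infty$ gives $p(T_1 x^*, x^*) = 0$, hence $T_1 x^* = x^*$ by the Remark.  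The identical argument with $T_2$ gives $T_2 x^* = x^*$.

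Finally, uniqueness: if $y^*$ is another common fixed point then $p(x^*, y^*) = p(T_1 x^*, T_2 y^*) \le k\, p(x^*, y^*)$, and since $k < 1$ this yields $p(x^*, y^*) = 0$, so $x^* = y^*$; the same contraction applied to $p(x^*,x^*) = p(T_1x^*,T_2x^*)$ reconfirms $p(x^*,x^*)=0$.  For the orbit convergence statement, note that for arbitrary $x$ the sequences $T_1^{2n+1}x$ and $T_2^{2n+2}x$ are precisely tails of the alternating iteration started appropriately (or can be compared to it via the contraction), so they converge to $x^*$ by the uniqueness of the fixed point and the Cauchy estimate above.  Throughout, the only genuinely delicate bookkeeping is handling the fixed block length $N$ from (pm4) in the Cauchy estimate; everything else is a routine adaptation of the Banach–Kannan scheme.
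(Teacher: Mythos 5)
Your proposal follows essentially the same route as the paper: the alternating Picard iteration $x_{2n+1}=T_1x_{2n}$, $x_{2n+2}=T_2x_{2n+1}$, the geometric decay $p(x_n,x_{n+1})\le k^n p(x_0,x_1)$, the polygon inequality (pm4) to obtain the Cauchy property, completeness to produce $x^*$ with $p(x^*,x^*)=0$, a one-intermediate-point application of (pm4) combined with the contraction to get $T_1x^*=x^*=T_2x^*$, and the identical uniqueness argument. The only place you diverge is that you are more scrupulous than the paper about the Cauchy step: the paper simply applies (pm4) with an arbitrary number of intermediate points and a single factor of $K$, even though the axiom as stated fixes that number, so your remark about splitting the index gap into blocks of the fixed length flags a genuine subtlety the paper glosses over (though note that a naive block decomposition accumulates powers of $K$ and needs the usual $b$-metric care to close).
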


\begin{proof}
	
	For any arbitrary $x_0 \in X$ and $n\geq 1$ . Set $x_1=T_1x_0;\ x_2=T_2x_1;\ x_3=T_1x_2;\cdots;\ x_{2n+1}=T_1x_{2n};\ x_{2n+2}=T_2x_{2n+1}; \cdots .$
	
	We have 
	\begin{align*}
	p(x_{2n+1},x_{2n}) & = p(T_1x_{2n},T_2x_{2n-1}) \leq k p(x_{2n},x_{2n-1} ) \\
	& \leq k^2 p(x_{2n-1},x_{2n-2}) \leq \cdots \leq k^{2n} p(x_1,x_0).
	\end{align*}

So for $n>m,$ using (pm4), we can write
\begin{align*}
p(x_{2n},x_{2m}) & \leq  K [p(x_{2n},x_{2n-1})+ p(x_{2n-1},x_{2n-2})+\cdots p(x_{2m+1},x_{2m})] - \sum_{i=2m+1}^{2n-1}p(x_i,x_i) \\
& \leq K (k^{2n-1}+k^{2n-2}+\cdots + k^{2m})p(x_1,x_0)\\
& \leq K \frac{k^{2n}}{1-k}p(x_1,x_0).
\end{align*}

Similar upper bounds can also be obtained if one consider the terms $ p(x_{2n+1},x_{2m}), p(x_{2n+1},x_{2m+1})$.

This implies $p(x_n,x_m) \to 0$ as $n,m\to \infty$. Hence $(x_n)$ is a Cauchy sequence. From the completeness of $X$, we know that there is $x^* \in X$ such that

$$ 0=\lim\limits_{n,m\to \infty} p(x_n , x_m)= \lim\limits_{n\to \infty} p(x_n , x^*)=p(x^*,x^*).$$


For $n\geq 1$, we have

\begin{align*}
p(T_1x^*,x^*)& \leq K [p(T_1x^*, T_2x_{2n+1})+ p(T_2 x_{2n+1},x^*)]\\
& \leq K [k p(x^*, x_{2n+1}) +  p( x_{2n+2},x^*)] \to 0 \text{ as } n\to\infty .
\end{align*}

Hence $p(T_1x^*,x^*)=0,$ i.e. $T_1x^*=x^*$. In the same way it can be established that$T_2x^*=x^*$. Hence $T_1x^*=x^*=T_2x^*$. Thus $x^*$ is the common fixed point of pair of maps $T_1$ and $T_2$.
The uniqueness is easily read from the contractive condition. Indeed if $u, v \in X$ are two distinct common fixed points of $T_1$ and $T_2$, that is, $T_1u =T_2u= u, T_1v=T_2 v = v$.
$$p(u,v) = p(T_1u,T_2v) \leq k p(u,v) <  p(u,v),$$
--a contradiction. Therefore, we must have $p(u, v) = 0$, that is, $u = v.$
	
\end{proof}

\begin{corollary}
	Let $(X,p,K)$ be a complete partial metric type space and suppose that the mappings $T_1,T_2: X \to X$ satisfy for some positive integer $n$ the contractive condition
	\[    
	p(T_1^{2n+1}x,T_2^{2n+2}y) \leq k p(x,y) \quad \text{ for all } x,y\in X,
	\]
	where $k\in [0,1)$ is a constant. Then $T_1$ and $T_2$ have a unique common fixed point $x^*\in X$.
\end{corollary}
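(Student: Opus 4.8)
The plan is to deduce this from Theorem~\ref{theorem1} by passing to the iterate maps $S_1:=T_1^{2n+1}$ and $S_2:=T_2^{2n+2}$. The hypothesis is exactly the statement that $p(S_1x,S_2y)\le k\,p(x,y)$ for all $x,y\in X$, and $(X,p,K)$ is a complete $K$-PMS, so Theorem~\ref{theorem1} applies verbatim to the pair $(S_1,S_2)$. This yields a unique common fixed point $x^*\in X$ with $S_1x^*=x^*=S_2x^*$, and from the proof of Theorem~\ref{theorem1} we also have $p(x^*,x^*)=0$.

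The substantive step is to upgrade $x^*$ from a common fixed point of the iterates to a common fixed point of $T_1$ and $T_2$ themselves. I would \emph{not} try to argue that $T_1x^*$ is again a common fixed point of $S_1$ and $S_2$ and then invoke uniqueness, because $T_1$ and $T_2$ are not assumed to commute, so there is no reason for $T_1x^*$ to be fixed by $S_2=T_2^{2n+2}$. Instead I would feed the point $T_1x^*$ directly into the contractive inequality. Since $T_1$ commutes with its own powers and $T_1^{2n+1}x^*=S_1x^*=x^*$, we get $S_1(T_1x^*)=T_1^{2n+2}x^*=T_1(T_1^{2n+1}x^*)=T_1x^*$, while $S_2x^*=x^*$; hence
\[
p(T_1x^*,x^*)=p\big(S_1(T_1x^*),\,S_2x^*\big)\le k\,p(T_1x^*,x^*).
\]
As $p(T_1x^*,x^*)\in[0,\infty)$ and $k\in[0,1)$, this forces $p(T_1x^*,x^*)=0$, and then (pm2) gives $p(T_1x^*,T_1x^*)=p(x^*,x^*)=0$, so (pm1) yields $T_1x^*=x^*$. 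The symmetric computation, using $S_2(T_2x^*)=T_2^{2n+3}x^*=T_2(T_2^{2n+2}x^*)=T_2x^*$ together with $S_1x^*=x^*$, gives $p(x^*,T_2x^*)\le k\,p(x^*,T_2x^*)$ and hence $T_2x^*=x^*$. Thus $x^*$ is a common fixed point of $T_1$ and $T_2$.

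Finally, for uniqueness I would note that any common fixed point $u$ of $T_1$ and $T_2$ satisfies $S_1u=T_1^{2n+1}u=u$ and $S_2u=T_2^{2n+2}u=u$, so $u$ is a common fixed point of $S_1$ and $S_2$; the uniqueness clause in Theorem~\ref{theorem1} then gives $u=x^*$. The only place that needs genuine care is the second paragraph — the passage from fixed points of the high iterates to fixed points of $T_1,T_2$ — which is exactly why the argument uses the contraction inequality evaluated at $T_ix^*$ rather than any commutativity of $T_1$ with $T_2$.
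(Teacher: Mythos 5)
Your proof is correct. Note that the paper states this corollary immediately after Theorem \ref{theorem1} with no proof at all, so there is nothing to compare against; your argument supplies the missing justification and is the natural one. Applying Theorem \ref{theorem1} to $S_1=T_1^{2n+1}$, $S_2=T_2^{2n+2}$ is immediate, and your key step --- evaluating the contraction at the pair $(T_1x^*,x^*)$ using $S_1(T_1x^*)=T_1x^*$ and $S_2x^*=x^*$ to get $p(T_1x^*,x^*)\le k\,p(T_1x^*,x^*)$ and hence $p(T_1x^*,x^*)=0$, then invoking (pm2) and (pm1) --- is sound, as is the symmetric computation for $T_2$ and the uniqueness argument via restriction to fixed points of the iterates. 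Your remark that one cannot simply argue ``$T_1x^*$ is again a common fixed point of $S_1$ and $S_2$'' is well taken: that shortcut, which works for a single map, genuinely fails here without commutativity of $T_1$ and $T_2$, and avoiding it is exactly the care this corollary requires.
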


\begin{theorem}\label{theorem2}
	Let $(X,p,K)$ be a complete metric type space and the mappings $T_1,T_2: X \to X$ satisfy the contractive condition
	\[    
	p(T_1x,T_2y) \leq k [p(T_1x,x)+ p(y,T_2y)] \quad \text{ for all } x,y\in X,
	\]
	where $k\in [0,\alpha)$ with $\alpha < \min 1/K$ is a constant. Then $T_1$ and $T_2$ have a unique common fixed point $x^*\in X$ and for any $x\in X,$ the orbits $\{T_1^{2n+1}x, \ n\geq 1 \}$ $\{T_2^{2n+2}x,\ n\geq 1 \}$ converge to the fixed point.
\end{theorem}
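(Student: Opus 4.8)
The plan is to follow the scheme of the proof of Theorem~\ref{theorem1}, replacing the Banach-type estimate with the Kannan-type one. Fix an arbitrary $x_0\in X$ and form the interleaved orbit $x_{2n+1}=T_1x_{2n}$, $x_{2n+2}=T_2x_{2n+1}$ for $n\ge 0$. Applying the contractive hypothesis to the pair $(x_{2n},x_{2n+1})$ gives
\[
p(x_{2n+1},x_{2n+2})=p(T_1x_{2n},T_2x_{2n+1})\le k\big[p(x_{2n+1},x_{2n})+p(x_{2n+1},x_{2n+2})\big],
\]
and applying it to $(x_{2n},x_{2n-1})$ gives $p(x_{2n+1},x_{2n})\le k\big[p(x_{2n+1},x_{2n})+p(x_{2n-1},x_{2n})\big]$. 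Writing $h:=k/(1-k)$ and rearranging, both estimates yield $p(x_{m+1},x_{m+2})\le h\,p(x_m,x_{m+1})$ for every $m\ge 0$; since $k<1/2$ we have $h<1$, whence $p(x_m,x_{m+1})\le h^{\,m}p(x_0,x_1)$ by induction.

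Next I would show that $(x_n)$ is Cauchy, exactly as in Theorem~\ref{theorem1}. For $n>m$, running (pm4) along the chain $x_{2n},x_{2n-1},\dots,x_{2m}$ and discarding the nonnegative self-distance terms subtracted on the right (which only enlarges the bound),
\[
p(x_{2n},x_{2m})\le K\big(h^{\,2n-1}+h^{\,2n-2}+\dots+h^{\,2m}\big)p(x_0,x_1)\le \frac{K\,h^{\,2m}}{1-h}\,p(x_0,x_1),
\]
and the same kind of bound handles $p(x_{2n+1},x_{2m})$ and $p(x_{2n+1},x_{2m+1})$. Hence $p(x_n,x_m)\to 0$ as $n,m\to\infty$, so $(x_n)$ is Cauchy, and by completeness of $(X,p,K)$ there is $x^*\in X$ with $\lim_{n,m}p(x_n,x_m)=\lim_n p(x_n,x^*)=p(x^*,x^*)=0$.

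To identify $x^*$ as a common fixed point I would mimic the last step of the proof of Theorem~\ref{theorem1}. From (pm4) with the single intermediate point $x_{2n+2}=T_2x_{2n+1}$,
\[
p(T_1x^*,x^*)\le K\big[p(T_1x^*,T_2x_{2n+1})+p(x_{2n+2},x^*)\big],
\]
while the contractive condition gives $p(T_1x^*,T_2x_{2n+1})\le k\big[p(T_1x^*,x^*)+p(x_{2n+1},x_{2n+2})\big]$. Combining and letting $n\to\infty$ (using $p(x_{2n+1},x_{2n+2})\to 0$ and $p(x_{2n+2},x^*)\to 0$) yields $(1-Kk)\,p(T_1x^*,x^*)\le 0$; since $Kk<1$ this forces $p(T_1x^*,x^*)=0$, i.e. $T_1x^*=x^*$. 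A symmetric argument starting from $p(x^*,T_2x^*)\le K\big[p(x^*,x_{2n+1})+p(T_1x_{2n},T_2x^*)\big]$ and using $p(x_{2n+1},x_{2n})\to 0$ gives $T_2x^*=x^*$. For uniqueness, if $u,v$ are common fixed points then $p(u,u)=p(T_1u,T_2u)\le 2k\,p(u,u)$ and $2k<1$ force $p(u,u)=0$, and likewise $p(v,v)=0$; then $p(u,v)=p(T_1u,T_2v)\le k\big[p(u,u)+p(v,v)\big]=0$, so $u=v$ by (pm1)--(pm2). The stated convergence of the orbits is precisely the convergence of this interleaved Picard sequence to $x^*$.

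The step I expect to be the most delicate is the Cauchy estimate: one must run the polygonal inequality (pm4) along chains whose length depends on $n$, $m$ and on the parities involved, while tracking the coefficient $K$ and the self-distance corrections. Because $h<1$, however, the geometric tail controls everything and no smallness of $K$ is needed there; the bound $Kk<1$ is used only to peel off the Kannan term in the fixed-point identification, and $k<1/2$ only to guarantee $h<1$ and to run the uniqueness argument (these being exactly the roles of the two constraints packaged in the hypothesis on $k$). Thus the whole difficulty is bookkeeping rather than a genuinely new idea beyond Theorem~\ref{theorem1}.
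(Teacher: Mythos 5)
Your proposal is correct and follows essentially the same route as the paper's proof: the interleaved Picard orbit, the Kannan estimate rearranged to $p(x_{m+1},x_{m+2})\le h\,p(x_m,x_{m+1})$ with $h=k/(1-k)$, the (pm4) chain for the Cauchy bound, and the $Kk<1$ peeling step to identify the fixed point. The only differences are cosmetic improvements in rigor --- you treat both parities in the one-step contraction and spell out the uniqueness argument (via $p(u,u)\le 2k\,p(u,u)$), which the paper leaves as ``easily read from the contractive condition.''
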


\begin{proof}
For any arbitrary $x_0 \in X$ and $n\geq 1$ . Set $x_1=T_1x_0;\ x_2=T_2x_1;\ x_3=T_1x_2;\cdots;\ x_{2n+1}=T_1x_{2n};\ x_{2n+2}=T_2x_{2n+1}; \cdots .$

We have 
\begin{align*}
p(x_{2n+1},x_{2n}) & =p(T_1x_{2n},T_2x_{2n-1})\\
& \leq k [p(T_1x_{2n},x_{2n}) + p(x_{2n-1},T_2x_{2n-1})]\\
& = k [p(x_{2n+1},x_{2n}) + p(x_{2n},x_{2n-1})].\\
\end{align*}
So, 
\[
p(x_{2n+1},x_{2n}) \leq h p(x_{2n},x_{2n-1}) \text{ with } h=\frac{k}{1-k}.
\]

So for $n>m,$ we can write
\begin{align*}
p(x_{2n},x_{2m}) & \leq  K [p(x_{2n},x_{2n-1})+ p(x_{2n-1},x_{2n-2})+\cdots p(x_{2m+1},x_{2m})] \\
& \leq K (h^{2n-1}+h^{2n-2}+\cdots + h^{2m})p(x_0,x_1)\\
& \leq K \frac{h^{2n}}{1-h}p(x_0,x_1).
\end{align*}

This implies $p(x_n,x_m) \to 0$ as $n,m\to \infty$. Hence $(x_n)$ is a Cauchy sequence. From the completeness of $X$, we know that there is $x^* \in X$ such that

$$ 0=\lim\limits_{n,m\to \infty} p(x_n , x_m)= \lim\limits_{n\to \infty} p(x_n , x^*)=p(x^*,x^*).$$

For $n\geq 1$, we have

\begin{align*}
p(T_1x^*,x^*)& \leq K [p(T_1x^*, T_2x_{2n+1})+ p(T_2 x_{2n+1},x^*)]\\
& \leq K [k p(T_1x^*, x^*) +  kp(  x_{2n+1},T_2x_{2n+1}) + p(x_{2n+2},x^*) ]\\
& \leq K [k p(T_1x^*, x^*) +  kp(  x_{2n+1},x_{2n+2}) + p(x_{2n+2},x^*) ].
\end{align*}

Since $k<\frac{1}{K}$, therefore $p(T_1x^*,x^*)=0,$ i.e.  $T_1x^*=x^*$. In the same way it can be established that$T_2x^*=x^*$. Hence $T_1x^*=x^*=T_2x^*$. Thus $x^*$ is the common fixed point of pair of maps $T_1$ and $T_2$.
The uniqueness of the common fixed point is easy read from the contractive condition.

\end{proof}

\section{Admissibility Conditions}

In this section, we analyze the existence of fixed points for mapping
defined on a complete $K$-PMS $(X, p, K)$ using some admissibility conditions. The results we present are natural extensions of the Banach contraction principle.

We then introduce the following definitions.
\begin{definition}
	Let $(X,p,K)$ be a $K$-PMS, $f:X\to X$ and $\alpha,\beta: X \times X\to [0,\infty)$ be mappings and $C_\alpha>0, C_\beta\geq 0$. We say that $T$ is $(C_\alpha,C_\beta)$-admissible with respect to $K$ if the following conditions hold: 
	\begin{enumerate}
		\item[(C1)] $\alpha(x,y)\geq C_\alpha \Longrightarrow \alpha(fx,fy)\geq C_\alpha,$ whenever $x,y\in X$;
		\item[(C2)]$\beta(x,y)\leq C_\beta \Longrightarrow \beta(fx,fy)\leq C_\beta,$ whenever $x,y\in X$;
		\item[(C3)] $0\leq C_\beta/C_\alpha < 1/K$.
	\end{enumerate}
\end{definition}

\begin{definition}
	Let $(X,p,K)$ be a partial metric type space. A function $T:X\to X$ is called \textbf{$p$-sequentially continuous} if for any $\tau(p)$-convergent sequence $(x_n)$ with $x_n \overset{p}{\longrightarrow} x $, the sequence $(Tx_n)$ $\tau(p)$-converges to $Tx$, i.e. $Tx_n \overset{p}{\longrightarrow} Tx $.
\end{definition}

%
%
%

We now state our main fixed point theorem.

\begin{theorem}\label{fix1}
	Let $(X,p,K)$ be a Hausdorff complete partial metric type space. Suppose that $f:X\to X$ is $(C_\alpha,C_\beta)$-admissible with respect to $K$. Assume that
	\begin{equation}\label{cond}
	\alpha(x,y)p(fx,fy) \leq \beta(x,y)p(x,y) \qquad \text{for all } \ \ x,y \in X.
	\end{equation}
	
	If the following conditions hold:
	
	\begin{enumerate}
		\item[(i)] $f$ is $p$-sequentially continuous;
		\item[(ii)] there exists $x_0\in X$ such that $\alpha(x_0,fx_0)\geq C_\alpha$ and
		$\beta(x_0,fx_0)\leq C_\beta$.
	\end{enumerate}
	Then $f$ has a fixed point.
\end{theorem}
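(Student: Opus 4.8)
The plan is to mimic the classical Picard-iteration argument for the Banach contraction principle, but carried out with the admissibility functions $\alpha,\beta$ playing the role of bookkeeping devices that let the hypothesis \eqref{cond} function as a genuine contraction along the orbit of the chosen starting point $x_0$. First I would fix $x_0$ as in hypothesis (ii) and define the Picard sequence $x_{n+1}=fx_n$ for $n\geq 0$. Using condition (C1) inductively, from $\alpha(x_0,x_1)\geq C_\alpha$ one gets $\alpha(x_n,x_{n+1})\geq C_\alpha$ for all $n$; similarly (C2) gives $\beta(x_n,x_{n+1})\leq C_\beta$ for all $n$. Feeding these two facts into \eqref{cond} with the pair $(x_n,x_{n+1})$ yields
\[
C_\alpha\, p(x_{n+1},x_{n+2}) \leq \alpha(x_n,x_{n+1})\,p(fx_n,fx_{n+1}) \leq \beta(x_n,x_{n+1})\,p(x_n,x_{n+1}) \leq C_\beta\, p(x_n,x_{n+1}),
\]
so that $p(x_{n+1},x_{n+2}) \leq \lambda\, p(x_n,x_{n+1})$ with $\lambda := C_\beta/C_\alpha \in [0,1/K) \subseteq [0,1)$ by (C3). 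Iterating, $p(x_n,x_{n+1}) \leq \lambda^n\, p(x_0,x_1)$.

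Next I would show $(x_n)$ is Cauchy in the sense of the $K$-PMS. For $n > m$, I would apply axiom (pm4) with $n=1$ (the ordinary $b$-type triangle inequality buried inside (pm4)) iteratively, or more efficiently apply (pm4) directly to the chain $x_m, x_{m+1}, \dots, x_n$ when the number of intermediate points is of the right shape; since (pm4) holds for a single fixed $n$, the cleanest route is to telescope using the $K$-relaxed triangle inequality in the form $p(x_m,x_n) \leq K\,p(x_m,x_{m+1}) + K\,p(x_{m+1},x_n)$ and induct, or simply note that the geometric estimate $\sum \lambda^j p(x_0,x_1)$ controls everything up to a factor that is a convergent series in $K\lambda, (K\lambda)^2,\dots$ — this works precisely because $K\lambda < 1$. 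Either way one obtains $\lim_{n,m\to\infty} p(x_n,x_m) = 0$, so $(x_n)$ is (in fact $0$-)Cauchy, hence Cauchy. By completeness of $(X,p,K)$ there is $x^*\in X$ with $\lim_{n,m}p(x_n,x_m) = \lim_n p(x_n,x^*) = p(x^*,x^*) = 0$, i.e. $x_n \overset{p}{\longrightarrow} x^*$.

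Finally I would invoke hypothesis (i): since $f$ is $p$-sequentially continuous and $x_n \overset{p}{\longrightarrow} x^*$, we get $fx_n \overset{p}{\longrightarrow} fx^*$, that is, $x_{n+1}\overset{p}{\longrightarrow} fx^*$, so $\lim_n p(x_{n+1}, fx^*) = p(fx^*,fx^*)$. But $(x_{n+1})$ is a subsequence of a sequence converging to $x^*$, so it also converges to $x^*$; the point where I must be careful is that limits need not be unique in a general $K$-PMS — this is exactly why the theorem hypothesizes that $(X,p,K)$ is Hausdorff. Under the Hausdorff assumption the $\tau(p)$-limit is unique, forcing $fx^* = x^*$. \textbf{The main obstacle} is precisely this last step: in the absence of uniqueness of limits one cannot pass from ``$x_{n+1}\to x^*$ and $x_{n+1}\to fx^*$'' to ``$fx^* = x^*$'', and one would instead need to estimate $p(x^*, fx^*)$ directly via $p(x^*,fx^*)\leq K\,p(x^*,x_{n+1}) + K\,p(x_{n+1},fx^*)$ and show both terms vanish, which in turn needs control of $p(x_{n+1},fx^*) = p(fx_n, fx^*)$ through \eqref{cond} — but that requires knowing $\alpha(x_n,x^*)\geq C_\alpha$, which is not granted by (C1) alone. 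The Hausdorff hypothesis is what lets us sidestep this; I would flag in the proof that it is used solely to guarantee uniqueness of the limit $x^*$.
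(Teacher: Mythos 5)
Your proposal is correct and follows essentially the same route as the paper's proof: the Picard sequence from $x_0$, the inductive propagation of the admissibility bounds via (C1)--(C2), the resulting contraction estimate $p(x_n,x_{n+1})\leq (C_\beta/C_\alpha)\,p(x_{n-1},x_n)$ with ratio below $1/K$, completeness plus $p$-sequential continuity to get $x_n\overset{p}{\longrightarrow}x^*$ and $x_{n+1}\overset{p}{\longrightarrow}fx^*$, and the Hausdorff hypothesis to force $fx^*=x^*$. Your explicit discussion of why Hausdorffness is needed (non-uniqueness of limits in a general $K$-PMS) and your more careful treatment of the Cauchy estimate, where the paper simply refers back to Theorem \ref{theorem1}, are welcome refinements but do not change the argument.
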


\begin{proof}
	Let $x_0\in X$ such that $\alpha(x_0,fx_0)\geq C_\alpha$ and $\beta(x_0,fx_0)\leq C_\beta$. Define the sequence $(x_n)$ by $x_n=f^nx_0=fx_{n-1}$. Without loss of generality, we can always assume that $x_n\neq x_{n+1}$ for all $n\in\mathbb{N}$, since if $x_{n_0}=x_{n_0+1}$ for some $n_0\in \mathbb{N},$ the proof is complete. Since $f$ is  $(C_\alpha,C_\beta)$-admissible with respect to $K$ and $\alpha(x_0,fx_0)= \alpha(x_0,x_1)\geq C_\alpha,$
	we deduce that $\alpha(x_1,x_2)= \alpha(fx_0,fx_1)\geq C_\alpha.$ By continuing this process, we get that
	$\alpha(x_n,x_{n+1})\geq C_\alpha,$ for all $n\geq 0$. Similarly, we establish that $\beta(x_n,x_{n+1})\leq C_\beta,$ for all $n\geq 0$. Using \eqref{cond}, we get 
	
	\begin{align*}
	C_\alpha p(x_n,x_{n+1}) & \leq  \alpha(x_{n-1},x_{n}) p(x_n,x_{n+1}) \\
	& \leq  \beta(x_{n-1},x_{n})  p(x_{n-1},x_{n}) \\
	& \leq C_\beta p(x_{n-1},x_n),
	\end{align*}
	and hence 
	\[
	p(x_n,x_{n+1}) \leq \frac{C_\beta}{C_\alpha} p(x_{n-1},x_n) \qquad \text{ for all } \ n\geq 1.
	\]

	 Since $0\leq  \frac{C_\beta}{C_\alpha} <1/K,$ we derive that $(x_n)$ is a Cauchy sequence (see Theorem \ref{theorem1}). 
	
	Since $(X,p,K)$ is complete and $T$ is $p$-sequentially continuous, there exists $x^*$ such that $x_n \overset{p}{\longrightarrow} x^*$ and $x_{n+1}\overset{p}{\longrightarrow} fx^*$. Since $X$ is Hausdorff, $x^*=fx^*$.
\end{proof}

We give the following results which are in fact consequences of the Theorem \ref{fix1}.

\begin{theorem}
	Let $(X,p,K)$ be a Hausdorff complete partial metric type space. Suppose that $f:X\to X$ is $(C_\alpha,C_\beta)$-admissible with respect to $K$. Assume that
	\begin{equation}\label{conds}
	\alpha(x,y)p(fx,fy) \leq \beta(x,y)p(x,y) \qquad \text{for all } \ \ x,y \in X.
	\end{equation}
	
	If the following conditions hold:
	
	\begin{enumerate}
		\item[(i)] there exists $x_0\in X$ such that $\alpha(x_0,fx_0)\geq C_\alpha$ and
		$\beta(x_0,fx_0)\leq C_\beta$;
		\item[(ii)] if $(x_n)$ is a sequence in $X$ such that $\alpha(x_n,x_{n+1})\geq C_\alpha$  and $\beta(x_n,x_{n+1})\leq C_\beta$ for all $n=1,2,\cdots$ and $x_n \overset{p}{\longrightarrow} x$, then there exists a subsequence $(x_{n(k)})$ of $(x_n)$ such that $\alpha(x,x_{n(k)})\geq C_\alpha$ and $\beta(x,x_{n(k)})\leq C_\beta$ for all $k$.
	\end{enumerate}
	Then $f$ has a fixed point.
\end{theorem}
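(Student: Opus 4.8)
The plan is to run the argument of Theorem~\ref{fix1} up to the point where a convergent Picard orbit has been produced, and then, in place of $p$-sequential continuity, to use condition~(ii) to identify the limit as a fixed point. First I would take $x_0$ from hypothesis~(i), set $x_n = f^n x_0 = f x_{n-1}$, and assume $x_n \neq x_{n+1}$ for all $n$ (otherwise a fixed point is already in hand). Using $(C_\alpha,C_\beta)$-admissibility with respect to $K$ together with $\alpha(x_0,x_1)\ge C_\alpha$, $\beta(x_0,x_1)\le C_\beta$, an immediate induction gives $\alpha(x_n,x_{n+1})\ge C_\alpha$ and $\beta(x_n,x_{n+1})\le C_\beta$ for every $n\ge0$. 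Feeding the pair $(x_{n-1},x_n)$ into \eqref{conds} yields
\[
C_\alpha\, p(x_n,x_{n+1}) \le \alpha(x_{n-1},x_n)\, p(x_n,x_{n+1}) \le \beta(x_{n-1},x_n)\, p(x_{n-1},x_n) \le C_\beta\, p(x_{n-1},x_n),
\]
hence $p(x_n,x_{n+1}) \le \lambda\, p(x_{n-1},x_n)$ with $\lambda := C_\beta/C_\alpha < 1/K$. From here the Cauchy estimate is exactly the one carried out in Theorem~\ref{theorem1} (telescope via (pm4) and sum a geometric series in a power of $\lambda$), so $p(x_n,x_m)\to0$ as $n,m\to\infty$; then $(x_n)$ is Cauchy, and completeness supplies $x^*\in X$ with $\lim_{n,m\to\infty}p(x_n,x_m)=\lim_{n\to\infty}p(x_n,x^*)=p(x^*,x^*)=0$.

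Next I would invoke hypothesis~(ii): since $\alpha(x_n,x_{n+1})\ge C_\alpha$, $\beta(x_n,x_{n+1})\le C_\beta$ for all $n$ and $x_n\overset{p}{\longrightarrow}x^*$, there is a subsequence $(x_{n(k)})$ with $\alpha(x^*,x_{n(k)})\ge C_\alpha$ and $\beta(x^*,x_{n(k)})\le C_\beta$ for all $k$. Applying \eqref{conds} to the pair $(x^*,x_{n(k)})$ gives
\[
C_\alpha\, p(fx^*, f x_{n(k)}) \le \alpha(x^*,x_{n(k)})\, p(fx^*, f x_{n(k)}) \le \beta(x^*,x_{n(k)})\, p(x^*,x_{n(k)}) \le C_\beta\, p(x^*,x_{n(k)}),
\]
and since $f x_{n(k)} = x_{n(k)+1}$ this reads $p(fx^*, x_{n(k)+1}) \le \lambda\, p(x^*,x_{n(k)})$. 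I would then close with (pm4) for $n=1$:
\[
p(fx^*,x^*) \le K\big[\, p(fx^*, x_{n(k)+1}) + p(x_{n(k)+1},x^*)\,\big] \le K\big[\, \lambda\, p(x^*,x_{n(k)}) + p(x_{n(k)+1},x^*)\,\big],
\]
and let $k\to\infty$. Because $x_n\overset{p}{\longrightarrow}x^*$ and $p(x^*,x^*)=0$, both $p(x^*,x_{n(k)})$ and $p(x_{n(k)+1},x^*)$ tend to $0$, so $p(fx^*,x^*)=0$, and therefore $fx^*=x^*$ by (pm1)--(pm2). Hence $x^*$ is a fixed point of $f$.

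I expect the only delicate point to be this final limit step: one must be sure that the right-hand side of the (pm4) inequality genuinely vanishes, which relies on the completeness conclusion $p(x^*,x^*)=\lim_n p(x_n,x^*)=0$ together with the stability of the admissibility constants along the subsequence granted by~(ii). Everything else — the induction on the admissibility inequalities, the contraction estimate $p(x_n,x_{n+1})\le\lambda\, p(x_{n-1},x_n)$, and the resulting Cauchy property — is a routine transcription of the proof of Theorem~\ref{fix1}. (Incidentally, Hausdorffness of $(X,p,K)$ is not actually needed for this existence argument; it would only matter if one wished to pin down the limit of the orbit or to argue uniqueness.)
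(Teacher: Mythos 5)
Your proposal is correct and follows the paper's proof almost verbatim: the same Picard orbit, the same induction giving $\alpha(x_n,x_{n+1})\geq C_\alpha$ and $\beta(x_n,x_{n+1})\leq C_\beta$, the same contraction estimate $p(x_n,x_{n+1})\leq (C_\beta/C_\alpha)\,p(x_{n-1},x_n)$ leading to Cauchyness, and the same use of hypothesis (ii) to obtain the chain $C_\alpha\, p(fx^*,x_{n(k)+1})\leq C_\beta\, p(x^*,x_{n(k)})$. The one genuine divergence is the closing step. The paper deduces $p(fx^*,x_{n(k)+1})\to 0$, notes that the subsequence therefore $\tau(p)$-converges to both $fx^*$ and $x^*$, and invokes the Hausdorff hypothesis to identify the two limits. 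You instead apply (pm4) to bound $p(fx^*,x^*)$ by $K\bigl[p(fx^*,x_{n(k)+1})+p(x_{n(k)+1},x^*)\bigr]$ (plus nonnegative self-distance terms that also vanish, since $p(x_{n(k)+1},x_{n(k)+1})\leq p(x_{n(k)+1},x^*)\to 0$ by (pm2)) and conclude $p(fx^*,x^*)=0$, hence $fx^*=x^*$ by (pm1)--(pm2). This buys you a slightly stronger result: as you observe, the Hausdorff assumption becomes superfluous for existence, whereas the paper's argument genuinely leans on it. Both routes rest on the same prior facts, namely $\lim_k p(x^*,x_{n(k)})=\lim_k p(x_{n(k)+1},x^*)=p(x^*,x^*)=0$ supplied by completeness, so your variant is sound.
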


\begin{proof}
	Following the proof of Theorem \ref{fix1}, we know that the sequence $(x_n)$ defined by 
	$x_{n+1}=fx_n$ for all $n=0,1,2,\cdots$ $p$-converges to some $x^*$ and satisfies $\alpha(x_n,x_{n+1})\geq C_\alpha$ and $\beta(x_n,x_{n+1})\leq C_\beta $ for $n\geq 1$. From the condition $(ii)$, we know there exists a subsequence $(x_{n(k)})$ of $(x_n)$ such that $\alpha(x^*,x_{n(k)})\geq C_\alpha$ and $\beta(x^*x_{n(k)})\leq C_\beta$ for all $k$. Since $f$ is satisfies \eqref{conds}, we get 
	
	\begin{align*}
	C_\alpha p(fx^*,x_{n(k)+1}) = C_\alpha p(fx^*,fx_{n(k)})& \leq  \alpha(x^*,x_{n(k)}) p(fx^*,fx_{n(k)}) \\
	& \leq   \beta(x^*,x_{n(k)})  p(x^*,x_{n(k)}) \\
	& \leq C_\beta  p(x^*,x_{n(k)})
	\end{align*}

	Letting $k\to \infty$, we obtain $p(fx^*,x_{n(k)+1}) \to 0$. Since $X$ is Hausdorff, we have that $fx^*=x^*$.
	This completes the proof.
\end{proof}

\section{The $\alpha$-series hypothesis}

In this section, we intend to extend the result of Choudhury\cite{bina1} by considering a family of self-maps on a partial metric type space. In particular, we make use of the so-called $\alpha$-series. We first recall the definition of an $\alpha$-series.


\begin{definition}\label{def1}(See \cite{Cal})
	
	Let $\{a_n \}$ be a sequence of non-negative real numbers. We say that the series $\sum_{n=1}^{\infty} a_n$ is an $\alpha$-series, if there exist $0<\lambda<1$ and $n(\lambda) \in \mathbb{N}$ such that
	
	$$\sum_{i=1}^{L} a_i \leq \lambda L \text{ for each } L\geq n(\lambda).$$
\end{definition}

\begin{remark}
	Each convergent series of non-negative real terms is an $\alpha$-series. However, there are also divergent series that are $\alpha$-series, and $\sum_{n=1}^{\infty}\frac{1}{n}$ is an instance.
\end{remark}

Also, we denote by $\Phi$ be the class of continuous, non-decreasing, sub-additive and homogeneous functions $F:[0,\infty) \to [0,\infty)$ such that $F^{-1}(0)=\{0\}$.

\subsection{Common fixed point theorems (Kannan-Choudhury case)}

\hspace*{2cm}	

\vspace*{0.2cm}

In this section, we prove existence of a unique common
fixed point for a family of contractive type self-maps on a
complete metric type space by using the Kannan contractive condition as a base.

\begin{theorem}\label{main}

	Let $(X,p,K)$ be a complete partial metric type space and $\{T_n\}$ be a sequence of self mappings on $X$ such that 
	
	\begin{align}\label{conditionqq}
	F(p(T_i(x),T_j(y))) \leq \  & F(\delta_{i,j}[p(x,T_i(x))+ p(y,T_j(y))])\\
	&- F(\gamma_{i,j}\psi[p(x,T_i(x)),p(y,T_j(y))]) \nonumber
	\end{align}
	
	for $x,y\in X$ with $x\neq y,$ $0\leq \delta_{i,j},\gamma_{i,j}<1 , \ i,j = 1,2,\cdots ,$ and for some $F \in \Phi$ homogeneous with degree $s$.
	and $\psi:[0,\infty)^2 \to [0,\infty)$ is a continuous mapping such that $%
	\psi(x,y)=0$ if and only if $x=y=0$. If the series $\sum_{i=1}^{\infty}s_i$ where $s_i=\frac{\delta_{i,i+1}^s }{1-\delta_{i,i+1}^s}$ is an 
	$\alpha$-series
	then $\{T_n\}$ has a unique common fixed point in $X$. 
	
\end{theorem}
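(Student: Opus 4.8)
The plan is to mimic the classical Kannan–Choudhury iteration argument, adapted to the partial metric type setting and the $\alpha$-series hypothesis. First I would fix an arbitrary $x_0 \in X$ and build the Picard-type orbit by setting $x_n = T_n(x_{n-1})$ for $n \geq 1$; as usual we may assume $x_{n-1} \neq x_n$ for all $n$, since otherwise a common fixed point candidate appears immediately (though in the partial-metric setting one must be slightly careful: $x_{n-1}=x_n$ only forces $x_{n-1}$ to be a fixed point of $T_n$, not yet of all the $T_m$, so I would still have to run the contractive estimate to propagate it — I will flag this below). Applying \eqref{conditionqq} with $i = n$, $j = n+1$, $x = x_{n-1}$, $y = x_n$ and using that $F$ is homogeneous of degree $s$, non-decreasing and sub-additive, I get
\[
F(p(x_n,x_{n+1})) \leq F\big(\delta_{n,n+1}[p(x_{n-1},x_n)+p(x_n,x_{n+1})]\big) - F\big(\gamma_{n,n+1}\psi[\,p(x_{n-1},x_n),p(x_n,x_{n+1})\,]\big),
\]
and dropping the (non-negative) subtracted term and using homogeneity $F(\delta t) = \delta^s F(t)$ plus sub-additivity, this rearranges to $F(p(x_n,x_{n+1})) \leq s_n F(p(x_{n-1},x_n))$ with $s_n = \delta_{n,n+1}^s/(1-\delta_{n,n+1}^s)$. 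Iterating, $F(p(x_n,x_{n+1})) \leq \big(\prod_{i=1}^n s_i\big) F(p(x_0,x_1))$.

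Next I would show $(x_n)$ is Cauchy in $(X,p,K)$. Using (pm4) with the chain $x_m, x_{m+1}, \dots, x_n$ (taking $n-m-1$ intermediate points), I get
\[
p(x_m,x_n) \leq K\big[p(x_m,x_{m+1}) + p(x_{m+1},x_{m+2}) + \cdots + p(x_{n-1},x_n)\big],
\]
so it suffices to show $\sum_n p(x_n,x_{n+1}) < \infty$. Here the $\alpha$-series hypothesis enters: since $\sum s_i$ is an $\alpha$-series, there are $0<\lambda<1$ and $n(\lambda)$ with $\sum_{i=1}^L s_i \leq \lambda L$ for $L \geq n(\lambda)$; by the AM–GM inequality $\prod_{i=1}^L s_i \leq \big(\frac{1}{L}\sum_{i=1}^L s_i\big)^L \leq \lambda^L$ for $L\geq n(\lambda)$. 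Since $F^{-1}(0)=\{0\}$ and $F$ is continuous and non-decreasing, $F(p(x_n,x_{n+1})) \leq \lambda^n F(p(x_0,x_1)) \to 0$ forces $p(x_n,x_{n+1}) \to 0$; more quantitatively one argues that $p(x_n,x_{n+1})$ is dominated by a summable sequence (one can extract, e.g. from $F(t)\geq$ some increasing control, that the tail sum is finite — this is the standard $\Phi$-class trick), so $\sum p(x_n,x_{n+1})<\infty$ and hence $(x_n)$ is Cauchy and in particular $p(x_n,x_m)\to 0$. By completeness there is $x^* \in X$ with $\lim_{n,m} p(x_n,x_m) = \lim_n p(x_n,x^*) = p(x^*,x^*) = 0$.

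To see $x^*$ is a common fixed point, fix any $j$ and estimate $p(x^*, T_j x^*)$ via (pm4): $p(x^*,T_jx^*) \leq K[p(x^*,x_n) + p(x_n, T_jx^*)]$ where $x_n = T_n x_{n-1}$; apply \eqref{conditionqq} to $p(T_n x_{n-1}, T_j x^*)$, use homogeneity and non-decreasingness of $F$, and note $p(x_{n-1},x_n)\to 0$ and $p(x^*,x^*)=0$ to conclude all the relevant terms vanish as $n\to\infty$; since $F^{-1}(0)=\{0\}$ this gives $p(x^*,T_jx^*)=0$, hence (pm1)–(pm2) give $T_j x^* = x^*$ for every $j$. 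Uniqueness: if $y^*$ is another common fixed point with $y^*\neq x^*$, apply \eqref{conditionqq} with $x=x^*$, $y=y^*$, $i=1$, $j=2$; the right-hand side becomes $F(\delta_{1,2}[\,p(x^*,x^*)+p(y^*,y^*)\,]) - F(\gamma_{1,2}\psi[0,0]) = F(0) - F(0) = 0$, so $F(p(x^*,y^*))=0$, whence $p(x^*,y^*)=0$ and $x^*=y^*$. The main obstacle I anticipate is the passage from $F(p(x_n,x_{n+1}))\le\lambda^n F(p(x_0,x_1))$ to genuine summability of $\sum p(x_n,x_{n+1})$: $F$ is only assumed continuous, non-decreasing, sub-additive, homogeneous with $F^{-1}(0)=\{0\}$, so one does not automatically get a power-type lower bound on $F$ near $0$; the cleanest route is to exploit homogeneity — if $F$ is homogeneous of degree $s>0$ then $F(t)=t^s F(1)$ for $t>0$ by the standard argument, so $p(x_n,x_{n+1})^s F(1) \le \lambda^n F(p(x_0,x_1))$ directly yields geometric decay of $p(x_n,x_{n+1})$ and hence summability; verifying this homogeneity identity carefully (and handling $F(1)=0$, which is excluded by $F^{-1}(0)=\{0\}$) is the delicate point.
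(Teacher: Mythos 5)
Your proposal is correct and follows the paper's proof almost step for step: the Picard orbit $x_n=T_n(x_{n-1})$, the Kannan-type rearrangement giving $F(p(x_n,x_{n+1}))\le s_nF(p(x_{n-1},x_n))$ via homogeneity and sub-additivity, the AM--GM conversion of the $\alpha$-series bound into $\prod_{i=1}^{k}s_i\le\lambda^{k}$, and essentially the same fixed-point and uniqueness arguments. The one place you genuinely diverge is the Cauchy step. The paper never tries to sum $p(x_n,x_{n+1})$: it applies $F$ to the polygon inequality (pm4) -- monotonicity, then $F(Kt)=K^{s}F(t)$, then sub-additivity -- to obtain $F(p(x_n,x_{n+p}))\le K^{s}\frac{\lambda^{n}}{1-\lambda}F(p(x_0,x_1))$, and then concludes $p(x_n,x_{n+p})\to 0$ directly from continuity, monotonicity and $F^{-1}(0)=\{0\}$ (a subsequence of $p(x_n,x_{n+p})$ bounded below by $\varepsilon>0$ would force $F\ge F(\varepsilon)>0$). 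This sidesteps the ``inversion of $F$'' difficulty you flag at the end. Your workaround -- exact homogeneity of degree $s$ forces $F(t)=t^{s}F(1)$ with $F(1)>0$, hence geometric decay and summability of $p(x_n,x_{n+1})$ -- is valid, and buys a slightly more quantitative conclusion, at the price of making explicit that the class $\Phi$ with exact homogeneity consists only of power functions. Both routes share the same inherited rough edges, which you should not count against yourself since they are in the paper's own proof: the use of (pm4) for chains of arbitrary length although the axiom fixes $n$; the passage to the limit in $F(p(x^{*},T_mx^{*}))\le\delta_{n,m}^{s}F(p(x^{*},T_mx^{*}))$, which tacitly needs $\limsup_{n}\delta_{n,m}^{s}<1$; and the uniqueness step, where both you and the paper silently assume $p(y^{*},y^{*})=0$ for the second common fixed point, which does not follow from the hypotheses since the contractive condition is only assumed for $x\neq y$.
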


\begin{proof}
	
	For any $x_0\in X$, we construct the Picard type sequence $(x_n)$ by setting $x_n= T_n(x_{n-1}),\ n=1,2,\cdots .$ Using \eqref{conditionqq} and the homogeneity of $F$, we obtain
	
	\begin{align*}
	F(p(x_1,x_2))  = \ & F(p(T_1(x_0),T_2(x_1))) \\
	\leq \ &\delta_{1,2}^s F([p(x_0,T_1(x_0)))+ p(x_1,T_2(x_1))]) \\
	& - \gamma_{1,2}^sF(\psi[p(x_0,T_1(x_0))), p(x_1,T_2(x_1))]) \\
	= \ & \delta_{1,2}^sF([p(x_0,x_1)+ p(x_1,x_2)] )-  \gamma_{1,2}^sF(\psi[p(x_0,x_1), p(x_1,x_2)])\\
	\leq & \delta_{1,2}^sF([p(x_0,x_1)+ p(x_1,x_2)] )
	\end{align*}
	
	Therefore, using the sub-additivity of $F$, we deduce that
	\[ (1-\delta_{1,2}^s)F(p(x_1,x_2)) \leq (\delta_{1,2}^s)   F(p(x_0,x_1)),\] 
	i.e.
	
	\[  F(p(x_1,x_2)) \leq \left(\frac{\delta_{1,2}^s}{1-\delta_{1,2}^s}   \right) F(p(x_0,x_1)) .\]
	
	Also, we get
	
	\begin{align*}
	F(p(x_2,x_3)) & = F(p(T_2(x_1),T_3(x_2))) \\
	& \leq \left(\frac{\delta_{2,3}^s}{1-\delta_{2,3}^s}   \right) F(p(x_1,x_2)) \\
	& \leq  \left(\frac{\delta_{2,3}^s}{1-\delta_{2,3}^s}   \right) \left(\frac{\delta_{1,2}^s}{1-\delta_{1,2}^s}   \right) F(p(x_0,x_1)).
	\end{align*}
	
	By repeating the above process, we have
	
	\begin{equation}
	F(p(x_n,x_{n+1})) \leq \prod\limits_{i=1}^n \left(\frac{\delta_{i,i+1}^s}{1-\delta_{i,i+1}^s}   \right)  F(p(x_0,x_1)).
	\end{equation}
	
	Hence we derive, by making use of the axiom (pm4) and the properties of $F$, that for $p>0$
	
	\begin{align*}
	F(p(x_n,x_{n+p})) \leq \ & K^s[F(p(x_n,x_{n+1})) +F(p(x_{n+1},x_{n+2})) \\
	& + \ldots + F(p(x_{n+p-1},x_{n+p}))]  \\
	\leq \ & K^s\left[\prod\limits_{i=1}^n \left(\frac{\delta_{i,i+1}^s}{1-\delta_{i,i+1}^s}   \right)  F(p(x_0,x_1)) \right. \\
	& + \prod\limits_{i=1}^{n+1} \left(\frac{\delta_{i,i+1}^s}{1-\delta_{i,i+1}^s}   \right)  F(p(x_0,x_1))        \\
	& + \ldots + \\
	& + \left.\prod\limits_{i=1}^{n+p-1} \left(\frac{\delta_{i,i+1}^s}{1-\delta_{i,i+1}^s}   \right)  F(p(x_0,x_1))\right] \\
	= \ & K^s\left[ \sum\limits_{k=0}^{p-1} \prod\limits_{i=1}^{n+k} \left(\frac{\delta_{i,i+1}^s}{1-\delta_{i,i+1}^s}   \right)  F(p(x_0,x_1))\right] \\
	= \ & K^s\left[\sum\limits_{k=n}^{n+p-1} \prod\limits_{i=1}^{k} \left(\frac{\delta_{i,i+1}^s}{1-\delta_{i,i+1}^s}   \right)  F(p(x_0,x_1))\right].
	\end{align*}

	Now, let $\lambda$ and $n(\lambda)$ as in Definition \ref{def1}, then for $n\geq n(\lambda)$ and using the fact that the geometric mean of non-negative real numbers is at most their arithmetic mean, it follows that 
	
	\begin{align}
	F(p(x_n,x_{n+p})) \leq \ & K^s\left[\sum\limits_{k=n}^{n+p-1} \left[ \frac{1}{k}\sum\limits_{i=1}^{k} \left(\frac{\delta_{i,i+1}^s}{1-\delta_{i,i+1}^s}   \right) \right]^k F(p(x_0,x_1))\right] \\
	\leq \ & K^s\left[ \left(\sum\limits_{k=n}^{n+p-1} \lambda^k \right) F(p(x_0,x_1))\right] \nonumber \\
	\leq \ & K^s \frac{\lambda^n}{1-\lambda}F(p(x_0,x_1)) \nonumber .
	\end{align}
	Letting $n\to \infty$ and since $F^{-1}(0)=\{0\}$ and $F$ is continuous, we deduce that $p(x_n,x_{n+p}) \to 0.$ Thus $(x_n)$ is a Cauchy sequence and, by completeness of $X$, converges to say $x^* \in X$, i.e. there exists  $x^* \in X$ such that
	
	$$ 0=\lim\limits_{n,m\to \infty} p(x_n , x_m)= \lim\limits_{n\to \infty} p(x_n , x^*)=p(x^*,x^*).$$
	
	Moreover, for any natural number $m\neq 0$, we have
	
	\begin{align*}
	F(p(x_n, T_m(x^*))) = \ & F(p(T_n(x_{n-1}),T_m(x^*))) \\
	\leq \ & \delta_{n,m}^s[F(p(x_{n-1},x_n))+ F(p(x^*,T_m(x^*)))] \\
	& - \gamma_{n,m}^sF(\psi[p(x_{n-1},x_n),p(x^*,T_m(x^*)]).
	\end{align*}
	
	Again, letting $n \to \infty$, we get
	
	\begin{align*}
	F(p(x^*, T_m(x^*))) & \leq \  \delta_{n,m}^s[F(p(x^*,x^*))+ F(p(x^*,T_m(x^*)))] - \gamma_{n,m}^sF(\psi[0,p(x^*,T_m(x^*)]) \\
	& \leq \ \delta_{n,m} ^sF(p(x^*,T_m(x^*)),
	\end{align*}
	and since $0\leq \delta_{n,m} <1$, it follows that $F(p(x^*, T_m(x^*)))=0$, i.e. $T_m(x^*)=x^*$.
	
	Then $x^*$ is a common fixed point of $\{T_m\}_{m\geq 1}$. 
	
	To prove the uniqueness of $x^*$, let us suppose that $y^*$ is a common fixed point of $\{T_m\}_{m\geq 1}$, that is $T_m(y^*)=y^*$ for any $m\geq 1$. Then, by \eqref{conditionqq}, we have
	
	\begin{align*}
	F(p(x^*,y^*))  & \leq \ F(p(T_m(x^*),T_m(y^*))) \\
	& \leq \ \delta_{n,m}^s [F(p(x^*,T_m(x^*)) + F(p(y^*,T_m(y^*))] - \gamma_{n,m}^sF(\psi[p(x^*,T_m(x^*), p(y^*,T_m(y^*))]) \\
	& = \delta_{n,m}^s [F(0) + F(0)] - \gamma_{n,m}^sF(\psi[0,0])\\
	& = 0.  
	\end{align*}
	
	So $x^*$ is the unique common fixed point of $\{T_m\}$. 
	
\end{proof}

As particular cases of Theorem \ref{main}, we have the following two corollaries.

\begin{corollary}\label{cor1}
	Let $(X,p,K)$ be a complete metric type space and $\{T_n\}$ be a sequence of self mappings on $X$ such that
	
	\begin{align}\label{condition}
	p(T_i(x),T_j(y)))\leq \  & \delta_{i,j}[p(x,T_i(x))+ p(y,T_j(y))]\\
	&- \gamma_{i,j}\psi[p(x,T_i(x)),p(y,T_j(y))] \nonumber
	\end{align}
	
	for $x,y\in X$ with $x\neq y,$ $0\leq \delta_{i,j},\gamma_{i,j}<1 , \ i,j = 1,2,\cdots ,$ 
	and $\psi:[0,\infty)^2 \to [0,\infty)$ is a continuous mapping such that $%
	\psi(x,y)=0$ if and only if $x=y=0$. If the series $\sum_{i=1}^{n}(s_i)$ where $s_i=\frac{\delta_{i,i+1}^s}{1-\delta_{i,i+1}^s}$ is an $\alpha$-series, 
	then $\{T_n\}$ has a unique common fixed point in $X$. 
	
\end{corollary}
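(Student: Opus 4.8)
The plan is to obtain Corollary \ref{cor1} as an immediate specialization of Theorem \ref{main}, by choosing $F$ to be the identity map on $[0,\infty)$. First I would verify that $F(t)=t$ indeed lies in the class $\Phi$: it is continuous and non-decreasing, it is sub-additive since $F(a+b)=a+b=F(a)+F(b)$, it is homogeneous (of degree $s=1$, as $F(\lambda t)=\lambda t=\lambda^1 F(t)$), and $F^{-1}(0)=\{0\}$. Hence Theorem \ref{main} applies with this $F$ and with homogeneity degree $s=1$.

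Next I would observe that, under the substitution $F=\mathrm{id}$, the contractive inequality \eqref{conditionqq} collapses term by term to
\[
p(T_i(x),T_j(y))\leq \delta_{i,j}\big[p(x,T_i(x))+p(y,T_j(y))\big]-\gamma_{i,j}\psi\big[p(x,T_i(x)),p(y,T_j(y))\big],
\]
which is precisely hypothesis \eqref{condition} of the corollary; the requirements $\delta_{i,j},\gamma_{i,j}\in[0,1)$ and the condition on the continuous map $\psi$ with $\psi(x,y)=0\iff x=y=0$ are carried over unchanged. With $s=1$ the sequence $s_i=\dfrac{\delta_{i,i+1}^s}{1-\delta_{i,i+1}^s}$ appearing in Theorem \ref{main} coincides with the one in the statement of the corollary, so the $\alpha$-series hypothesis (in the sense of Definition \ref{def1}) transfers verbatim; one should read the exponent $s$ in the corollary as $s=1$.

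Having matched every hypothesis, I would simply invoke Theorem \ref{main} to conclude that $\{T_n\}$ has a unique common fixed point in $X$. There is essentially no obstacle beyond the bookkeeping of checking that the identity belongs to $\Phi$ and that its homogeneity degree is $1$; the only subtlety worth flagging is the harmless notational mismatch in the exponent $s$ between the general theorem and this corollary, resolved by the remark that $\mathrm{id}$ is $1$-homogeneous. If a self-contained argument were preferred, one could instead repeat the proof of Theorem \ref{main} with $F$ deleted throughout, using the axiom (pm4) directly in place of its $K^s$-weighted form and applying the arithmetic–geometric mean inequality exactly as before; but routing through Theorem \ref{main} is the cleaner route.
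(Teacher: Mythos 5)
Your proposal is correct and takes exactly the same route as the paper, whose proof of this corollary is simply ``Apply Theorem \ref{main} by putting $F=I_{[0,\infty)}$, the identity map.'' Your additional bookkeeping (verifying that the identity lies in $\Phi$, that its homogeneity degree is $s=1$, and that the exponent $s$ in the $\alpha$-series hypothesis should accordingly be read as $1$) is a welcome clarification of a detail the paper leaves implicit.
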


\begin{proof} 
	Apply Theorem \ref{main} by putting $F=I_{[0,\infty)}$, the identity map.	
\end{proof}

\begin{corollary}\label{cor2}
	Let $(X,p,K)$ be a complete metric type space and $\{T_n\}$ be a sequence of self mappings on $X$ such that 
	
	\begin{align}\label{condition2wwwwww}
	F(p(T_i(x),T_j(y))) \leq \  & F(\delta_{i,j}[p(x,T_i(x))+ p(y,T_j(y))])
	\end{align}
	
	for $x,y\in X$ with $x\neq y,$ $0\leq \delta_{i,j}<1 , \ i,j = 1,2,\cdots ,$ and for some $F \in \Phi$ homogeneous with degree $s$.
	If the series $\sum_{i=1}^{n}(s_i)$ where $s_i=\frac{\delta_{i,i+1}^s}{1-\delta_{i,i+1}^s}$ is an $\alpha$-series, 
	then $\{T_n\}$ has a unique common fixed point in $X$. 
	
\end{corollary}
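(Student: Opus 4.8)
The plan is to obtain Corollary \ref{cor2} as the degenerate case $\gamma_{i,j}\equiv 0$ of Theorem \ref{main}, rather than redoing the Picard iteration from scratch. The first step I would record is the elementary fact that every $F\in\Phi$ satisfies $F(0)=0$: the hypothesis $F^{-1}(0)=\{0\}$ means that $F(t)=0$ precisely when $t=0$, so in particular $0\in F^{-1}(0)$, i.e. $F(0)=0$. Consequently, if in \eqref{conditionqq} we set $\gamma_{i,j}=0$ for all $i,j$, the subtracted term becomes $F\big(0\cdot\psi[p(x,T_i(x)),p(y,T_j(y))]\big)=F(0)=0$, so that \eqref{conditionqq} collapses to exactly the inequality \eqref{condition2wwwwww} assumed in the corollary, and this happens irrespective of which auxiliary function $\psi$ one uses.

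Next I would verify that, with this choice of parameters, all the hypotheses of Theorem \ref{main} are in force. The bounds $0\le\delta_{i,j}<1$ are assumed in the corollary; the bounds $0\le\gamma_{i,j}<1$ hold trivially because $\gamma_{i,j}=0$; the map $F$ is the same element of $\Phi$, homogeneous of degree $s$; and we may take any admissible auxiliary function, for instance $\psi(u,v)=u+v$, which is continuous and satisfies $\psi(u,v)=0$ if and only if $u=v=0$. Finally, the $\alpha$-series requirement on $\sum_{i} s_i$ with $s_i=\delta_{i,i+1}^{\,s}/(1-\delta_{i,i+1}^{\,s})$ is word-for-word the same in both statements. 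Hence Theorem \ref{main} applies directly and delivers a unique common fixed point of $\{T_n\}$ in $X$, which is precisely the conclusion of the corollary.

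I do not expect any genuine obstacle here; the only points that warrant a moment's care are the verification $F(0)=0$ (which is what makes the $\gamma$-term vanish identically, so that the weaker hypothesis \eqref{condition2wwwwww} still triggers Theorem \ref{main}) and the observation that the degenerate choice $\gamma_{i,j}\equiv 0$ lies inside the admissible parameter range $[0,1)$ demanded by Theorem \ref{main}. Once these are noted, the proof is a one-line invocation of the theorem, in the same spirit as the deduction of Corollary \ref{cor1} via the substitution $F=I_{[0,\infty)}$.
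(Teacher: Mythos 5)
Your proposal is correct and follows exactly the paper's own route: the paper's entire proof is ``Apply Theorem \ref{main} by putting $\gamma_{i,j}=0$,'' and you simply flesh out that one line with the (correct) verifications that $F(0)=0$ makes the $\gamma$-term vanish and that a valid auxiliary $\psi$ can always be chosen.
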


\begin{proof} 
	Apply Theorem \ref{main} by putting\footnote{In this case, we can choose $(b_n)$ to be any constant sequence of elements of $X$.} $\gamma_{i,j}=0$.
\end{proof}

A more general\footnote{Natural in some sense} weak contraction could also be considered as we write our next result.

\begin{theorem}\label{main2}
	Let $(X,p,K)$ be a complete metric type space and $\{T_n\}$ be a sequence of self mappings on $X$ such that
	
	\begin{align}\label{condition2}
	F(p(T_i(x),T_j(y))) \leq \  & F(\delta_{i,j}[p(x,T_i(x))+ p(y,T_j(y))+p(x,y)])\\
	&- F(\gamma_{i,j}\psi[p(x,T_i(x)),p(y,T_j(y)),p(x,y)]) \nonumber
	\end{align}
	
	for $x,y\in X$ with $x\neq y,$ $0\leq \delta_{i,j},\gamma_{i,j}<1 , \ i,j = 1,2,\cdots ,$ and for some $F \in \Phi$ homogeneous with degree $s$,
	and $\psi:[0,\infty)^3 \to [0,\infty)$ is a continuous mapping such that $%
	\psi(x,y,z)=0$ if and only if $x=y=z=0$.If the series $\sum_{i=1}^{n}(s_i)$ where $s_i=\frac{\delta_{i,i+1}^s}{1-\delta_{i,i+1}^s}$ is an $\alpha$-series, 
	then $\{T_n\}$ has a unique common fixed point in $X$.  
	
\end{theorem}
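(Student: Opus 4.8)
The plan is to follow the template of the proof of Theorem \ref{main} almost verbatim, adjusting only the bookkeeping needed to absorb the extra term $p(x,y)$ that now appears inside $F(\delta_{i,j}[\cdots])$. First I would fix $x_0\in X$ and build the Picard-type sequence $x_n=T_n(x_{n-1})$, assuming $x_n\neq x_{n+1}$ for all $n$ (otherwise a fixed point has already been located, exactly as in the earlier proofs). Applying \eqref{condition2} to the pair $(x_{n-1},x_n)$ with indices $(n,n+1)$ and using homogeneity and sub-additivity of $F$, I get
\[
F(p(x_n,x_{n+1}))\le \delta_{n,n+1}^s\big[F(p(x_{n-1},x_n))+F(p(x_n,x_{n+1}))+F(p(x_{n-1},x_n))\big],
\]
since $p(x_n,T_n(x_{n-1}))=p(x_n,x_n)\le p(x_{n-1},x_n)$ — actually one must be slightly careful here: $p(x_{n-1},T_n(x_{n-1}))=p(x_{n-1},x_n)$, $p(x_n,T_{n+1}(x_n))=p(x_n,x_{n+1})$, and $p(x_{n-1},x_n)$ is just the third slot. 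Collecting the $F(p(x_n,x_{n+1}))$ terms on the left yields
\[
F(p(x_n,x_{n+1}))\le \frac{2\delta_{n,n+1}^s}{1-\delta_{n,n+1}^s}\,F(p(x_{n-1},x_n)),
\]
which is the only structural change: the contraction ratio is now $2s_n$ rather than $s_n$, where $s_n=\delta_{n,n+1}^s/(1-\delta_{n,n+1}^s)$.

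Next I would iterate this to obtain $F(p(x_n,x_{n+1}))\le 2^n\prod_{i=1}^n s_i\,F(p(x_0,x_1))$, then estimate $F(p(x_n,x_{n+p}))$ via (pm4) and the properties of $F$ exactly as in Theorem \ref{main}, getting a sum $K^s\sum_{k=n}^{n+p-1}2^k\prod_{i=1}^k s_i\,F(p(x_0,x_1))$. Here is where the $\alpha$-series hypothesis enters: by the arithmetic–geometric mean inequality, $\prod_{i=1}^k s_i\le\big(\frac1k\sum_{i=1}^k s_i\big)^k\le\lambda^k$ for $k\ge n(\lambda)$, so the tail is dominated by $K^sF(p(x_0,x_1))\sum_{k\ge n}(2\lambda)^k$. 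The potential obstacle is that this series converges only if $2\lambda<1$; so I would either (a) strengthen the hypothesis on the $\alpha$-series to require $\lambda<1/2$, or (b) absorb the factor $2$ by noting that the definition of $\alpha$-series with parameter $\lambda$ can be applied after rescaling, i.e. replace $s_i$ by $2s_i$ throughout and require that $\sum 2s_i$ (equivalently $\sum s_i$) be an $\alpha$-series with its $\lambda$ satisfying $2\lambda<1$. In the write-up I would simply state the conclusion under the reading of the hypothesis that makes the geometric series summable, mirroring how the paper handles such constants loosely; the honest fix is condition (a).

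Once $(x_n)$ is shown Cauchy, completeness gives $x^*\in X$ with $0=\lim_{n,m}p(x_n,x_m)=\lim_n p(x_n,x^*)=p(x^*,x^*)$. To show $T_m(x^*)=x^*$ for an arbitrary $m\ge 1$, I would plug $(x_{n-1},x^*)$ with indices $(n,m)$ into \eqref{condition2}:
\[
F(p(x_n,T_m(x^*)))\le \delta_{n,m}^s\big[F(p(x_{n-1},x_n))+F(p(x^*,T_m(x^*)))+F(p(x_{n-1},x^*))\big]-\gamma_{n,m}^sF(\psi[\cdots]),
\]
and let $n\to\infty$. Using $p(x_{n-1},x_n)\to0$, $p(x_{n-1},x^*)\to0$, continuity of $F$ and of $\psi$, and the fact that the left side tends to $F(p(x^*,T_m(x^*)))$ (after a harmless use of continuity of $p$ in the partial-metric-type sense, as in the earlier theorems), I obtain $F(p(x^*,T_m(x^*)))\le \delta_{n,m}^sF(p(x^*,T_m(x^*)))$, and since $\delta_{n,m}<1$ this forces $F(p(x^*,T_m(x^*)))=0$, hence $T_m(x^*)=x^*$. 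Uniqueness follows exactly as in Theorem \ref{main}: if $y^*$ is another common fixed point, then applying \eqref{condition2} to $(x^*,y^*)$ with indices $(m,m)$ makes every bracketed argument vanish ($p(x^*,T_m(x^*))=p(x^*,x^*)=0$, etc.), so $F(p(x^*,y^*))=0$ and $x^*=y^*$. The main obstacle, to reiterate, is the extra factor of $2$ in the contraction constant, which must be reconciled with the $\alpha$-series hypothesis; everything else is a routine adaptation of the preceding proof.
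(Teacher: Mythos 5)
The paper gives no separate argument for Theorem \ref{main2} --- it states that the proof ``is merely a copy of that of Theorem \ref{main}'' --- and your proposal is exactly that copy, so the approach matches. More importantly, the obstacle you flag is genuine and is the one substantive point where a verbatim copy breaks down: applying \eqref{condition2} to $(x_{n-1},x_n)$ with indices $(n,n+1)$, the third slot contributes a second copy of $p(x_{n-1},x_n)$, so after homogeneity and sub-additivity the recursion becomes $F(p(x_n,x_{n+1}))\le \frac{2\delta_{n,n+1}^s}{1-\delta_{n,n+1}^s}F(p(x_{n-1},x_n))$ (or with $2^s$ in place of $2$ if one groups the two equal terms before applying sub-additivity), and the AM--GM step then produces $\sum_k(2\lambda)^k$, which the stated hypothesis ($\lambda<1$) does not control. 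Your proposed repair --- requiring $\sum 2s_i$ (equivalently $\sum 2^s s_i$) to be the $\alpha$-series, or $\lambda<1/2$ --- is the right one, and the paper itself tacitly concedes the point: the analogous theorem in the concluding section and the example following Corollary \ref{cor11} both define $s_i=\frac{2^s\delta_{i,i+1}^s}{1-\delta_{i,i+1}^s}$, whereas the statement of Theorem \ref{main2} omits the factor. So you have correctly located an error in the theorem's hypothesis rather than a gap in your own argument.

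One small slip at the end: in the uniqueness step for the three-term condition, not ``every bracketed argument vanishes'' --- the third argument is $p(x^*,y^*)$ itself, and $p(y^*,T_m(y^*))=p(y^*,y^*)$ need not be $0$ a priori in a partial metric type space. The step still closes, but only via $F(p(x^*,y^*))\le\delta_{m,m}^s\bigl[F(p(y^*,y^*))+F(p(x^*,y^*))\bigr]\le 2\delta_{m,m}^sF(p(x^*,y^*))$ using (pm2) and monotonicity of $F$, which again needs $2\delta_{m,m}^s<1$ (or an argument that the self-distance of any fixed point vanishes). This mirrors an unjustified ``$F(p(y^*,T_m(y^*)))=F(0)$'' already present in the paper's proof of Theorem \ref{main}, so it is not a defect relative to the paper's own standard, but it is worth patching if you write this up.
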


We shall omit the proof as it is merely a copy of that of Theorem \ref{main}. Then we also give, without proof the following corollary.

\begin{corollary}\label{cor11}
	Let $(X,D,K)$ be a complete metric type space and $\{T_n\}$ be a sequence of self mappings on $X$ such that
	
	\begin{align}\label{condition22}
	F(p(T_i(x),T_j(y)))\leq \  &F( \delta_{i,j}[p(x,T_i(x))+ p(y,T_j(y))+D(x,y)]),\\
	\end{align}
	
	for $x,y\in X$ with $x\neq y,$ $0\leq \delta_{i,j}<1 , \ i,j = 1,2,\cdots ,$ where 
	 $\psi:[0,\infty)^3 \to [0,\infty)$ is a continuous mapping such that $%
	\psi(x,y,z)=0$ if and only if $x=y=z=0$. If the series $\sum_{i=1}^{n}(s_i)$ where $s_i=\frac{\delta_{i,i+1}^s}{1-\delta_{i,i+1}^s}$ is an $\alpha$-series, 
	then $\{T_n\}$ has a unique common fixed point in $X$. 
	
\end{corollary}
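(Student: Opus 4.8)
The plan is to read this corollary off Theorem \ref{main2} in exactly the way Corollary \ref{cor2} was read off Theorem \ref{main}: specialize $\gamma_{i,j}=0$ for all $i,j$. Since every $F\in\Phi$ is homogeneous of some degree $s>0$ (degree $0$ would force $F$ to be constant, which is incompatible with $F^{-1}(0)=\{0\}$), we have $F(\gamma_{i,j}\psi[\cdots])=0^{s}F(\psi[\cdots])=0$ whenever $\gamma_{i,j}=0$, so the weak contraction \eqref{condition2} degenerates to \eqref{condition22} and the auxiliary function $\psi$ ceases to play any role (any continuous $\psi$ vanishing only at the origin will do). The only hypothesis that survives — that $\sum_i s_i$ with $s_i=\delta_{i,i+1}^{s}/(1-\delta_{i,i+1}^{s})$ is an $\alpha$-series — is precisely the one required in Theorem \ref{main2}, so the existence and uniqueness of a common fixed point follow immediately.

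If instead one wants a self-contained argument, it is a verbatim copy of the proof of Theorem \ref{main}. Fix $x_0\in X$ and set $x_n=T_n(x_{n-1})$, assuming $x_n\neq x_{n-1}$ for all $n$ (otherwise the proof is finished). Applying \eqref{condition22} with $i=n$, $j=n+1$, $x=x_{n-1}$, $y=x_n$, together with the monotonicity and sub-additivity of $F$, gives
\[
(1-\delta_{n,n+1}^{s})\,F(p(x_n,x_{n+1}))\le \delta_{n,n+1}^{s}\,F(p(x_{n-1},x_n)),
\]
hence $F(p(x_n,x_{n+1}))\le\prod_{i=1}^{n}s_i\,F(p(x_0,x_1))$. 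Then (pm4) applied to $F\circ p$ with coefficient $K^{s}$, the geometric-mean $\le$ arithmetic-mean inequality, and the $\alpha$-series property yield $F(p(x_n,x_{n+p}))\le K^{s}\frac{\lambda^{n}}{1-\lambda}F(p(x_0,x_1))\to 0$, so $(x_n)$ is Cauchy; completeness supplies $x^{*}$ with $0=\lim_{n,m}p(x_n,x_m)=\lim_{n}p(x_n,x^{*})=p(x^{*},x^{*})$. Inserting $x=x_{n-1}$, $y=x^{*}$, $i=n$, $j=m$ in \eqref{condition22} and letting $n\to\infty$ (using continuity of $F$ and $p(x^{*},x^{*})=0$) produces $F(p(x^{*},T_m(x^{*})))\le\delta_{n,m}^{s}F(p(x^{*},T_m(x^{*})))$, forcing $T_m(x^{*})=x^{*}$ for every $m$; uniqueness follows by feeding two common fixed points into \eqref{condition22}, as in Theorem \ref{main}.

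The one point that needs a word of care — and the only way this statement differs cosmetically from Theorem \ref{main2} — is the mixed notation ``$p$'' and ``$D$'' on the right of \eqref{condition22}: I would simply read $D$ as the partial metric type $p$ itself (equivalently, in the metric-type reduction one writes $p$ throughout). With that reading there is nothing genuinely new, and the only ``obstacle'' is the purely bookkeeping verification that the extra summand $p(x,y)$ on the right-hand side is harmless: along the diagonal pairs $(x_{n-1},x_n)$ used in the iteration it equals $p(x_{n-1},x_n)$, the term already absorbed into the recursion in Theorem \ref{main2}, and along the pair $(x^{*},x^{*})$ it equals $p(x^{*},x^{*})=0$, so it drops out of the fixed-point and uniqueness steps.
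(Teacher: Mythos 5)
Your first route --- specialize Theorem \ref{main2} by taking $\gamma_{i,j}=0$, so that $F(\gamma_{i,j}\psi[\cdots])=F(0)=0$ --- is exactly what the paper intends: it states this corollary without proof, in direct analogy with the one-line proof of Corollary \ref{cor2}, and reading the stray ``$D$'' in \eqref{condition22} as $p$ is the right call. Modulo trusting Theorem \ref{main2} as a black box, that part is fine.

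The self-contained argument, however, contains a step that fails: the claim that the extra summand $p(x,y)$ is ``harmless'' and ``already absorbed into the recursion.'' With $x=x_{n-1}$, $y=x_n$, $i=n$, $j=n+1$, the bracket on the right of \eqref{condition22} equals $2p(x_{n-1},x_n)+p(x_n,x_{n+1})$, so homogeneity and sub-additivity of $F$ give
\[
(1-\delta_{n,n+1}^{s})\,F(p(x_n,x_{n+1}))\ \le\ 2^{s}\,\delta_{n,n+1}^{s}\,F(p(x_{n-1},x_n)),
\]
not the inequality you display. The recursion coefficient is therefore $\frac{2^{s}\delta_{i,i+1}^{s}}{1-\delta_{i,i+1}^{s}}$, and the $\alpha$-series hypothesis must be imposed on $\sum_i\frac{2^{s}\delta_{i,i+1}^{s}}{1-\delta_{i,i+1}^{s}}$ rather than on the stated $\sum_i\frac{\delta_{i,i+1}^{s}}{1-\delta_{i,i+1}^{s}}$; the latter being an $\alpha$-series does not imply the former (take $\frac{\delta_{i,i+1}^{s}}{1-\delta_{i,i+1}^{s}}=0.9$ for all $i$ and $s=\tfrac12$: then $\lambda=0.9$ witnesses the stated hypothesis, but $\sqrt{2}\cdot 0.9>1$ destroys the corrected one). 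This is not an invention of mine: the worked example immediately following the corollary computes $s_i=\frac{2^{s}\delta_{i,i+1}^{s}}{1-\delta_{i,i+1}^{s}}$, and the final-section theorem featuring the $+\,p(x,y)$ term states its hypothesis with the $2^{s}$ factor. So either strengthen the hypothesis to the $2^{s}$-weighted series or accept that your recursion does not prove the statement as printed. The fixed-point and uniqueness steps are unaffected, since there only a single term of the bracket survives in the limit.
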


		\begin{example}
		Let $X=[0,1]$ and $p$ be the metric type defined by $p(x,y)= (\max\{x,y\})^2$ for $x,y \in X$. Clearly, $(X,p,2)$ is a complete partial metric type space. 
		Following the notation in Theorem \ref{main}, we set 
		 $\delta_{i,j}=\left(\frac{1}{1+2^\eta}\right)^2$ where $\eta= \min\{i,j\}$. We also define $T_i(x)= \frac{x}{16^i}$ for all $x\in X$ and $i=1,2,\cdots$ and $F:[0,\infty) \to [0,\infty), \ x\mapsto \sqrt{x}$. Then $F$ is continuous, non-decreasing, sub-additive and homogeneous with degree $s=\frac{1}{2}$ and $F^{-1}(0)=\{0\}$.

		Assume $i<j$ and $x>y$. Hence we have
		
		\[ F(p(T_i(x),T_j(y)))  = \frac{x}{16^i} \]
		and
		
		\[ F( \delta_{i,j} [p(x,T_i(x))+ p(y,T_j(y))+p(x,y)]  ) =  \sqrt { \left(\frac{1}{1+2^i}\right)^2 (2x^2+y^2)} .\]
		
		Therefore condition \eqref{condition22} is satisfied for all $x,y\in X$ with $x\neq y$. Moreover, since $F$ is homogeneous with degree $s=\frac{1}{2}$, the sequence
		$$ s_i=\frac{ 2^s \delta_{i,i+1}^s }{1-\delta_{i,i+1}^s} = \frac{\sqrt{2}}{2^i} $$
		is a $\lambda$-sequence with $\lambda=\frac{\sqrt{2}}{2}$. Then by Corollary \ref{cor11}, $\{T_n\}$ has a common fixed point, which is this case $x^*=0.$
	\end{example}

The contractive condition in Theorem \ref{main} can be relaxed and we obtain the next result: 

\begin{theorem}\label{relaxed}
	Let $(X,D,K)$ be a complete partial metric type space and $\{T_n\}$ be a sequence of self mappings on $X$ such that
	
	\begin{align}\label{conditionrelaxed}
	F(p(T_i(x),T_j(y))) \leq \  & F(\delta_{i,j}[p(x,T_i(x))+ p(y,T_j(y))])\\
	&- F(\gamma_{i,j}\psi[p(x,T_i(x)),p(y,T_j(y))]) \nonumber
	\end{align}
	
	for $x,y\in X$ with $x\neq y,$ $0\leq \delta_{i,j}, \gamma_{i,j} , \ i,j = 1,2,\cdots ,$ for some $F \in \Phi$ homogeneous with degree $s$ and $\psi:[0,\infty)^2 \to [0,\infty)$ is a continuous mapping such that $%
	\psi(x,y)=0$ if and only if $x=y=0$.
	If 
	\begin{itemize}
		\item[i)] for each $j,$ $\underset{i \to \infty}{\limsup}\ \delta_{i,j}^s <1, $
		
		\item[ii)] $$\sum_{n=1}^{\infty} C_n < \infty \text{ where } C_n = \prod\limits_{i=1}^n \frac{\delta_{i,i+1}^s}{1-\delta_{i,i+1}^s}, $$
	\end{itemize}

	then $\{T_n\}$ has a unique common fixed point in $X$. 
	
\end{theorem}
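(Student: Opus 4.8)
The plan is to follow the scheme of the proof of Theorem \ref{main} almost verbatim for the first part of the argument, replacing the $\alpha$-series estimate by a direct comparison with the convergent series $\sum C_n$. Starting from an arbitrary $x_0\in X$, I would build the Picard-type sequence $x_n=T_n(x_{n-1})$ and, exactly as in Theorem \ref{main}, use the homogeneity and sub-additivity of $F$ together with \eqref{conditionrelaxed} to derive, for each $n\ge 1$,
\[
F(p(x_n,x_{n+1}))\ \le\ \Bigl(\prod_{i=1}^{n}\frac{\delta_{i,i+1}^s}{1-\delta_{i,i+1}^s}\Bigr)\,F(p(x_0,x_1))\ =\ C_n\,F(p(x_0,x_1)).
\]
Here the key point for this step to make sense is that the coefficient $\delta_{i,i+1}^s/(1-\delta_{i,i+1}^s)$ must be well defined and we need $\delta_{i,i+1}^s<1$; condition (i) guarantees $\limsup_{i}\delta_{i,j}^s<1$ for each fixed $j$, so for $j=i+1$ the quantities $\delta_{i,i+1}^s$ are eventually bounded away from $1$, and (after discarding finitely many indices, or simply noting the products are over finitely many terms) the expression $C_n$ is a finite positive number; this is the place where (i) is genuinely used.

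Next I would estimate $F(p(x_n,x_{n+p}))$ by applying axiom (pm4) (in its $F$-transformed form, using $K^s$ as in Theorem \ref{main}) to get
\[
F(p(x_n,x_{n+p}))\ \le\ K^s\sum_{k=n}^{n+p-1}F(p(x_k,x_{k+1}))\ \le\ K^s\,F(p(x_0,x_1))\sum_{k=n}^{n+p-1}C_k,
\]
and now condition (ii) — the convergence of $\sum C_n$ — forces the tail $\sum_{k\ge n}C_k\to 0$, hence $F(p(x_n,x_{n+p}))\to 0$ uniformly in $p$. Since $F$ is continuous with $F^{-1}(0)=\{0\}$, this yields $p(x_n,x_{n+p})\to 0$, so $(x_n)$ is Cauchy; by completeness of $(X,p,K)$ there is $x^*\in X$ with $\lim_{n,m}p(x_n,x_m)=\lim_n p(x_n,x^*)=p(x^*,x^*)=0$. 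The identification of $x^*$ as a common fixed point and its uniqueness then go through word for word as in Theorem \ref{main}: for fixed $m$, one writes $F(p(x_n,T_m(x^*)))=F(p(T_n(x_{n-1}),T_m(x^*)))$, applies \eqref{conditionrelaxed}, lets $n\to\infty$ using $p(x^*,x^*)=0$, and uses that $\delta_{n,m}^s$ stays strictly below $1$ (again from (i)) to conclude $F(p(x^*,T_m(x^*)))=0$, i.e. $T_m(x^*)=x^*$; uniqueness follows by plugging two common fixed points into \eqref{conditionrelaxed} and using $\psi(0,0)=0$, $F(0)=0$.

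The main obstacle — and the only real difference from Theorem \ref{main} — is handling the coefficients cleanly now that the $\alpha$-series hypothesis is gone and the $\delta_{i,j}$ are not assumed $<1$ a priori: one must extract from (i) that $\delta_{i,i+1}^s<1$ for all large $i$ so that the ratios $\delta_{i,i+1}^s/(1-\delta_{i,i+1}^s)$, and hence the products $C_n$, are meaningful and finite, and one must similarly ensure the final step ``$\delta_{n,m}^s<1$'' is legitimate for the fixed $m$ under consideration as $n\to\infty$. Once that bookkeeping is done, condition (ii) replaces the role played by the $\alpha$-series estimate $\sum\lambda^k<\infty$ in Theorem \ref{main}, and the rest of the proof is identical.
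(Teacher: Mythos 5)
Your proposal is correct and follows essentially the same route as the paper's own (very terse) proof: the paper likewise builds the Picard sequence, records the bound $F(p(x_n,x_{n+1}))\le C_n F(p(x_0,x_1))$, bounds $F(p(x_n,x_{n+p}))$ by $K^s\sum_{k=n}^{n+p-1}C_k\,F(p(x_0,x_1))$, and invokes the convergence of $\sum C_n$ to get Cauchyness, leaving the fixed-point identification and uniqueness as ``easy to see.'' Your version is in fact more careful than the paper's, since you explicitly flag where hypothesis (i) is needed to make the ratios $\delta_{i,i+1}^s/(1-\delta_{i,i+1}^s)$ and the final step $\delta_{n,m}^s<1$ legitimate.
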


\begin{proof}
	Just observe that the Picard type sequence $(x_n)=(T_n(x_{n-1}))$ for any initial $x_0\in X$ is such that 
	
	\begin{equation}
	F(p(x_n,x_{n+1})) \leq \prod\limits_{i=1}^n \left(\frac{\delta_{i,i+1}^s}{1-\delta_{i,i+1}^s}   \right)  F(p(x_0,x_1)) =: C_n F(p(x_0,x_1)),
	\end{equation}
	and so for $p>1$
	
	\begin{align*}
	F(p(x_n,x_{n+p})) \leq K^{s}\left[\sum\limits_{k=n}^{n+p-1} C_k\right] F(p(x_0,x_1)).
	\end{align*}
	Letting $n\to \infty$, we deduce that $p(x_n,x_{n+p}) \to 0.$ Thus $(x_n)$ is a Cauchy sequence and, by completeness of $X$, converges to say $x^* \in X$. It is easy to see that $x^*$ is the unique common fixed point of $\{T_m\}$.	
	
\end{proof}

\begin{corollary}\label{correlax}
	Let $(X,D,K)$ be a complete partial metric type space and $\{T_n\}$ be a sequence of self mappings on $X$ such that
	
	\begin{align}\label{condcorrelaxed}
	F(p(T_i(x),T_j(y))) \leq \  & F(\delta_{i,j}[p(x,T_i(x))+ p(y,T_j(y))])
	\end{align}
	
	for $x,y\in X$ with $x\neq y,$ $0\leq \delta_{i,j}<1 , \ i,j = 1,2,\cdots ,$ for some $F \in \Phi$ homogeneous with degree $s$ and $\psi:[0,\infty)^2 \to [0,\infty)$ is a continuous mapping such that $%
	\psi(x,y)=0$ if and only if $x=y=0$.
	
	If 
	\begin{itemize}
		\item[i)] for each $j,$ $\underset{i \to \infty}{\limsup}\ \delta_{i,j}^s <1, $
		
		\item[ii)] $$\sum_{n=1}^{\infty} C_n < \infty \text{ where } C_n = \prod\limits_{i=1}^n \frac{\delta_{i,i+1}^s}{1-\delta_{i,i+1}^s}, $$
	\end{itemize}
	
	then $\{T_n\}$ has a unique common fixed point in $X$. 	
\end{corollary}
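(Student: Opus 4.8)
The plan is to obtain Corollary~\ref{correlax} as the special case $\gamma_{i,j}\equiv 0$ of Theorem~\ref{relaxed}. First I would note that every $F\in\Phi$ satisfies $F(0)=0$ (immediate from $F^{-1}(0)=\{0\}$, or from homogeneity), so with $\gamma_{i,j}=0$ the subtracted term $F(\gamma_{i,j}\psi[p(x,T_i(x)),p(y,T_j(y))])$ in \eqref{conditionrelaxed} vanishes identically and \eqref{conditionrelaxed} reduces to exactly \eqref{condcorrelaxed}; the auxiliary function $\psi$ then plays no role and may be taken to be any admissible one, as in the footnote to Corollary~\ref{cor2}. The coefficient hypotheses (i) and (ii) are verbatim the same in the two statements, and the bound $0\le\delta_{i,j}<1$ imposed here is a fortiori compatible with Theorem~\ref{relaxed}. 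Hence every hypothesis of Theorem~\ref{relaxed} is met, and its conclusion — existence and uniqueness of a common fixed point of $\{T_n\}$ — transfers directly.

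If instead one prefers a self-contained argument, it is the transcription of the proof of Theorem~\ref{relaxed} (itself modelled on Theorem~\ref{main}): form the Picard-type sequence $x_n=T_n(x_{n-1})$, iterate \eqref{condcorrelaxed} using the sub-additivity and homogeneity of $F$ to get $F(p(x_n,x_{n+1}))\le C_n\,F(p(x_0,x_1))$ with $C_n=\prod_{i=1}^n \delta_{i,i+1}^s/(1-\delta_{i,i+1}^s)$, then invoke axiom (pm4) and the properties of $F$ to obtain, for $p\ge 1$, $F(p(x_n,x_{n+p}))\le K^s\big(\sum_{k=n}^{n+p-1}C_k\big)F(p(x_0,x_1))$. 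Hypothesis (ii) together with continuity of $F$ and $F^{-1}(0)=\{0\}$ forces $p(x_n,x_{n+p})\to 0$, so $(x_n)$ is Cauchy and converges, by completeness, to some $x^*$ with $p(x^*,x^*)=0$. Finally, applying \eqref{condcorrelaxed} to the pairs $(x_{n-1},x^*)$ and letting $n\to\infty$ along a subsequence realizing $\limsup_{i}\delta_{i,m}^s$, hypothesis (i) yields $F(p(x^*,T_m(x^*)))\le\big(\limsup_{i}\delta_{i,m}^s\big)F(p(x^*,T_m(x^*)))$, hence $F(p(x^*,T_m(x^*)))=0$, i.e. $T_m(x^*)=x^*$ for every $m$; uniqueness follows, as before, by feeding two common fixed points into \eqref{condcorrelaxed}.

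The only step that requires genuine care — and the reason condition (i) cannot be dropped — is this last one: unlike in Theorem~\ref{main}, where each $\delta_{i,j}<1$ outright, here the coefficients are controlled only in the limit as the first index grows, so one must pass to an appropriate subsequence (equivalently, take a $\limsup$ in $n$) before the contraction inequality collapses to $T_m(x^*)=x^*$. Everything else is routine bookkeeping inherited from Theorems~\ref{main} and~\ref{relaxed}.
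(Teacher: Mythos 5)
Your proposal is correct and takes essentially the same route as the paper: the paper's entire proof is ``Apply Theorem~\ref{relaxed} by putting $\gamma_{i,j}=0$,'' which is exactly your first paragraph. The additional self-contained transcription of the argument of Theorem~\ref{relaxed} and the remark about where hypothesis (i) is actually used are sound but go beyond what the paper records.
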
		

\begin{proof}
	Apply Theorem \ref{relaxed} by putting $\gamma_{i,j}=0$.
\end{proof}

\begin{example}

	Let $X=[0,1]$ and $p(x,y)= |x-y|^2$ whenever $x,y \in [0,1]$. Clearly, $(X,p,2)$ is a complete partial metric type space. 
	
	Following the notation in Theorem \ref{relaxed}, we set $\delta_{i,j} = \left(\frac{1}{1+2^i}\right)^2$.
	We also define $T_i(x)= \frac{x}{4^i}$ for all $x\in X$ and $i=1,2,\cdots$ and $F:[0,\infty) \to [0,\infty), \ x\mapsto \sqrt{x}$. Then $F$ is continuous, non-decreasing, sub-additive and homogeneous of degree $s=\frac{1}{2}$ and $F^{-1}(0)=\{0\}$. Assume $i<j$ and $x>y\geq z$. Hence we have
	\[
	F(p(T_i(x),T_j(y))  = \sqrt{\left|\frac{x}{4^i}- \frac{y}{4^j}     \right|^2}
	\]
	
	and
	
	\[ F( \delta_{i,j} [p(x,T_i(x))+ p(y,T_j(y))]  ) =  \sqrt { \left(\frac{1}{1+2^i}\right)^2 \left(\left|x-\frac{x}{4^i}\right|^2+ \left| y-\frac{y}{4^j} \right|^2 \right)} .\]
	
	Therefore condition \eqref{condcorrelaxed} is satisfied for all $x,y\in X$ with $x\neq y$. Moreover, since $F$ is homogeneous of degree $s=\frac{1}{2}$, we have 
	\begin{itemize}
		\item[i)] $\underset{i \to \infty}{\limsup}\ \delta^s_{i,j} <1, $

		\item[ii)] $$ C_n= \prod\limits_{i=1}^n \frac{\delta_{i,i+1}^s}{1-\delta_{i,i+1}^s} =  \prod\limits_{i=1}^n \frac{1}{2^i}=  \frac{1}{2^{\frac{n(n+1)}{2}}} \leq \frac{1}{2 ^n}.$$
		
	\end{itemize}
	
	The conditions of Corollary \ref{correlax} are satisfied, $\{T_n\}$ has a common fixed point, which is this case $x^*=0.$

\end{example}

\subsection{Common fixed point theorems (Chatterjea-Choudhury case)} \hspace*{2cm}	

\vspace*{0.2cm}

In this section, we prove existence of a unique common
fixed point for a family of contractive type self-maps on a
complete partial metric type space by using the Chatterjea contractive condition as a base.

\begin{theorem}\label{chat1}
	
	Let $(X,p,K)$ be a complete partial metric type space and $\{T_n\}$ be a sequence of self mappings on $X$ such that
	
	\begin{align}\label{condchat1}
	F(p(T_i(x),T_j(y))) \leq \  & F(\delta_{i,j}[p(x,T_j(y))+ p(y,T_i(x))])\\
	&- F(\gamma_{i,j}\psi[p(x,T_j(y)),p(y,T_i(x))]) \nonumber
	\end{align}
	
	for $x,y\in X$ with $x\neq y,$ $0\leq \delta_{i,j},\gamma_{i,j}<1 , \ i,j = 1,2,\cdots ,$ and for some $F \in \Phi$ homogeneous with degree $s$.
	and $\psi:[0,\infty)^2 \to [0,\infty)$ is a continuous mapping such that $%
	\psi(x,y)=0$ if and only if $x=y=0$. If the series $\sum_{i=1}^{\infty}s_i$ where $s_i=\frac{\delta_{i,i+1}^s }{1-\delta_{i,i+1}^s}$ is an 
	$\alpha$-series
	then $\{T_n\}$ has a unique common fixed point in $X$.

\end{theorem}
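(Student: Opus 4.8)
The plan is to mimic the proof of Theorem \ref{main} almost verbatim, since the Chatterjea-type condition \eqref{condchat1} differs from the Kannan-type condition \eqref{conditionqq} only in that the arguments $p(x,T_i(x))$ and $p(y,T_j(y))$ are replaced by the cross terms $p(x,T_j(y))$ and $p(y,T_i(x))$. First I would fix $x_0\in X$ and build the Picard-type sequence $x_n=T_n(x_{n-1})$, assuming without loss of generality that consecutive terms are distinct. Applying \eqref{condchat1} with $i=n$, $j=n+1$, $x=x_{n-1}$, $y=x_n$ gives
\[
F(p(x_n,x_{n+1})) \leq F\bigl(\delta_{n,n+1}[p(x_{n-1},x_{n+1})+p(x_n,x_n)]\bigr)-F\bigl(\gamma_{n,n+1}\psi[p(x_{n-1},x_{n+1}),p(x_n,x_n)]\bigr),
\]
and after dropping the (nonnegative) $F\circ\psi$ term and using homogeneity this becomes $F(p(x_n,x_{n+1}))\leq \delta_{n,n+1}^s\,F\bigl(p(x_{n-1},x_{n+1})+p(x_n,x_n)\bigr)$.

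The key new step — and the main obstacle — is to control $p(x_{n-1},x_{n+1})+p(x_n,x_n)$ by a combination of $p(x_{n-1},x_n)$ and $p(x_n,x_{n+1})$. Here is where I would invoke axiom (pm4): with $n=1$ in that axiom (a $b$-metric-type triangle inequality with self-distance correction) we have
\[
p(x_{n-1},x_{n+1})+p(x_n,x_n)\leq K\bigl[p(x_{n-1},x_n)+p(x_n,x_{n+1})\bigr].
\]
Feeding this into the previous inequality and using the monotonicity, subadditivity and homogeneity of $F$ yields $F(p(x_n,x_{n+1}))\leq \delta_{n,n+1}^s K^s\bigl[F(p(x_{n-1},x_n))+F(p(x_n,x_{n+1}))\bigr]$, hence
\[
F(p(x_n,x_{n+1}))\leq \frac{\delta_{n,n+1}^s K^s}{1-\delta_{n,n+1}^s K^s}\,F(p(x_{n-1},x_n)),
\]
provided $\delta_{n,n+1}^s K^s<1$. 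This is slightly stronger than what the statement literally assumes, so I would either strengthen the hypothesis to $\delta_{i,j}^sK^s<1$ (equivalently redefine $s_i=\frac{\delta_{i,i+1}^s K^s}{1-\delta_{i,i+1}^s K^s}$ in the $\alpha$-series), or absorb the $K^s$ factor into the constant so that $s_i$ has the displayed form after that adjustment; in either case the structure of the argument is unchanged. Iterating gives $F(p(x_n,x_{n+1}))\leq \prod_{i=1}^n s_i\cdot F(p(x_0,x_1))$.

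From this point the proof is identical to that of Theorem \ref{main}: using (pm4) with $n=p-1$ intermediate points, $F(p(x_n,x_{n+p}))\leq K^s\sum_{k=n}^{n+p-1}\prod_{i=1}^k s_i\,F(p(x_0,x_1))$, and then the AM--GM inequality together with the $\alpha$-series hypothesis $\sum_{i=1}^L s_i\leq\lambda L$ shows that $\prod_{i=1}^k s_i\leq\lambda^k$ for $k\geq n(\lambda)$, so $F(p(x_n,x_{n+p}))\leq K^s\frac{\lambda^n}{1-\lambda}F(p(x_0,x_1))\to 0$. Since $F$ is continuous with $F^{-1}(0)=\{0\}$, $(x_n)$ is $0$-Cauchy; by completeness there is $x^*$ with $\lim p(x_n,x_m)=\lim p(x_n,x^*)=p(x^*,x^*)=0$. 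To identify the fixed point, apply \eqref{condchat1} with $i=n$, $j=m$, $x=x_{n-1}$, $y=x^*$, obtaining
\[
F(p(x_n,T_m(x^*)))\leq \delta_{n,m}^s F\bigl(p(x_{n-1},T_m(x^*))+p(x^*,x_n)\bigr)-\gamma_{n,m}^sF(\psi[\cdot,\cdot]);
\]
letting $n\to\infty$ and using $p(x_n,x^*)\to 0$, $p(x_{n-1},T_m(x^*))\to p(x^*,T_m(x^*))$ gives $F(p(x^*,T_m(x^*)))\leq \delta_{n,m}^s F(p(x^*,T_m(x^*)))$, forcing $F(p(x^*,T_m(x^*)))=0$, i.e. $T_m(x^*)=x^*$ for every $m$. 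Uniqueness follows exactly as in Theorem \ref{main}: if $y^*$ is another common fixed point then \eqref{condchat1} with $i=j=m$, $x=x^*$, $y=y^*$ gives $F(p(x^*,y^*))\leq \delta_{m,m}^s F(p(x^*,y^*)+p(y^*,x^*))-\gamma_{m,m}^sF(\psi[p(x^*,y^*),p(x^*,y^*)])$; a short estimate using $\delta_{m,m}^sK^s<1$ then yields $p(x^*,y^*)=0$, hence $x^*=y^*$. The only real subtlety is the bookkeeping around the factor $K^s$ in the contraction constant, which I would handle by the adjustment to the $\alpha$-series hypothesis noted above.
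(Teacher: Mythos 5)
Your proposal follows the same route the paper intends: the paper's entire proof of this theorem is the single sentence that it ``follows exactly the same steps as the proof of Theorem \ref{main}.'' Your more careful execution, however, exposes exactly where those steps are \emph{not} the same, and that is the valuable part of your write-up. In the Kannan case the terms $p(x,T_i(x))$ and $p(y,T_j(y))$ evaluate directly to $p(x_{n-1},x_n)$ and $p(x_n,x_{n+1})$ along the Picard orbit, so no triangle-type inequality is needed; in the Chatterjea case the cross terms produce $p(x_{n-1},x_{n+1})+p(x_n,x_n)$, and your appeal to (pm4) to dominate this by $K[p(x_{n-1},x_n)+p(x_n,x_{n+1})]$ is unavoidable. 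The resulting contraction factor is $\frac{\delta_{n,n+1}^sK^s}{1-\delta_{n,n+1}^sK^s}$, which requires $\delta_{n,n+1}^sK^s<1$ and does not reduce to the $s_i=\frac{\delta_{i,i+1}^s}{1-\delta_{i,i+1}^s}$ of the stated $\alpha$-series hypothesis unless $K=1$; so the strengthening you propose is genuinely needed, and the theorem as stated (together with its one-line proof) glosses over this. Two smaller points: in the uniqueness step the relevant constant is $2^s\delta_{m,m}^s$, since $F(p(x^*,y^*)+p(y^*,x^*))=2^sF(p(x^*,y^*))$ by homogeneity, not $\delta_{m,m}^sK^s$ as you wrote, and forcing $p(x^*,y^*)=0$ thus needs either $2^s\delta_{m,m}^s<1$ for some $m$ or a strictly positive $\gamma_{m,m}\psi$ term; and the limit passage $p(x_{n-1},T_m(x^*))\to p(x^*,T_m(x^*))$ presumes a continuity of $p$ that is not automatic in a partial metric type space --- but the paper's own proof of Theorem \ref{main} commits the same sin, so you are no worse off there.
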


\begin{proof}
	The proof follows exactly the same steps as the proof of Theorem \ref{main}
\end{proof}

\begin{theorem}\label{chat2}
	Let $(X,p,K)$ be a complete partial metric type space and $\{T_n\}$ be a sequence of self mappings on $X$ such that
	
	\begin{align}\label{condchat2}
	F(p(T_i(x),T_j(y))) \leq \  & F(\delta_{i,j}[p(x,T_j(x))+ p(y,T_i(y))+p(x,y)])\\
	&- F(\gamma_{i,j}\psi[p(x,T_j(y)),p(y,T_i(x)),p(x,y)]) \nonumber
	\end{align}
	
	for $x,y\in X$ with $x\neq y,$ $0\leq \delta_{i,j},\gamma_{i,j}<1 , \ i,j = 1,2,\cdots ,$ and for some $F \in \Phi$ homogeneous with degree $s$, where 
	and $\psi:[0,\infty)^3 \to [0,\infty)$ is a continuous mapping such that $%
	\psi(x,y,z)=0$ if and only if $x=y=z=0$. If the series $\sum_{i=1}^{\infty}s_i$ where $s_i=\frac{\delta_{i,i+1}^s }{1-\delta_{i,i+1}^s}$ is an 
	$\alpha$-series
	then $\{T_n\}$ has a unique common fixed point in $X$. 
\end{theorem}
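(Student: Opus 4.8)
The plan is to mimic almost verbatim the proof of Theorem \ref{main}, with the contractive condition \eqref{condchat2} replacing \eqref{conditionqq}, the only new wrinkle being that the right-hand side now mixes the Chatterjea-type terms $p(x,T_j(x))$, $p(y,T_i(y))$ with the extra displacement term $p(x,y)$. First I would fix an arbitrary $x_0\in X$ and build the Picard-type sequence $x_n=T_n(x_{n-1})$, assuming without loss of generality $x_n\neq x_{n+1}$ for all $n$ (otherwise a fixed point is already exhibited). Applying \eqref{condchat2} with $(x,y)=(x_{n-1},x_n)$ and $(i,j)=(n,n+1)$, and using the homogeneity and sub-additivity of $F$ together with the fact that the $\gamma$-term and $\psi$ are non-negative so $-F(\gamma_{i,j}\psi[\cdots])\leq 0$, I would obtain an inequality of the shape
\[
F(p(x_n,x_{n+1}))\leq \delta_{n,n+1}^s\,F\big(p(x_{n-1},x_n)+p(x_n,x_{n+1})+p(x_{n-1},x_n)\big),
\]
and then, after absorbing the $p(x_n,x_{n+1})$ term to the left, arrive at
\[
F(p(x_n,x_{n+1}))\leq \Big(\tfrac{\delta_{n,n+1}^s}{1-\delta_{n,n+1}^s}\Big)\,c\,F(p(x_{n-1},x_n))
\]
for an absolute constant $c$ coming from the doubled occurrence of $p(x_{n-1},x_n)$; this is the one genuinely new computation relative to Theorem \ref{main}, and it is the step I expect to require the most care, since one must check that the combined coefficient still produces a summable majorant. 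Iterating gives $F(p(x_n,x_{n+1}))\leq C^n\prod_{i=1}^n\big(\tfrac{\delta_{i,i+1}^s}{1-\delta_{i,i+1}^s}\big)F(p(x_0,x_1))$.

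Next I would invoke axiom (pm4) and the properties of $F$ (continuity, monotonicity, sub-additivity, homogeneity) exactly as in Theorem \ref{main} to bound $F(p(x_n,x_{n+p}))$ by $K^s\sum_{k=n}^{n+p-1}\prod_{i=1}^{k}(\cdots)$ times $F(p(x_0,x_1))$, then use the arithmetic–geometric mean inequality together with the $\alpha$-series hypothesis $\sum_i s_i$, $s_i=\tfrac{\delta_{i,i+1}^s}{1-\delta_{i,i+1}^s}$, to conclude that $\sum_{k\geq n(\lambda)}\lambda^k$ controls the tail; hence $p(x_n,x_{n+p})\to 0$, so $(x_n)$ is Cauchy. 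Completeness of $(X,p,K)$ yields $x^*\in X$ with $\lim_{n,m}p(x_n,x_m)=\lim_n p(x_n,x^*)=p(x^*,x^*)=0$.

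Finally, to identify $x^*$ as a common fixed point I would estimate $F(p(x_n,T_m(x^*)))=F(p(T_n(x_{n-1}),T_m(x^*)))$ via \eqref{condchat2} with $(x,y)=(x_{n-1},x^*)$, let $n\to\infty$ using $p(x_{n-1},x_n)\to 0$, $p(x_{n-1},x^*)\to 0$, $p(x_n,x^*)\to 0$, and the continuity of $F$ and $\psi$, to get $F(p(x^*,T_m(x^*)))\leq \delta^s\,F(p(x^*,T_m(x^*)))$ for some coefficient $\delta<1$; since $F^{-1}(0)=\{0\}$ this forces $T_m(x^*)=x^*$ for every $m$. Uniqueness follows as in Theorem \ref{main}: if $y^*$ is another common fixed point then \eqref{condchat2} with $x=x^*$, $y=y^*$, $i=j=m$ gives $F(p(x^*,y^*))\leq \delta_{m,m}^s[F(p(x^*,x^*))+F(p(y^*,y^*))+F(p(x^*,y^*))]-\gamma_{m,m}^sF(\psi[\cdots])$, and since $p(x^*,x^*)=p(y^*,y^*)=0$ and the $\psi$-term is non-negative, this yields $(1-\delta_{m,m}^s)F(p(x^*,y^*))\leq 0$, hence $p(x^*,y^*)=0$ and $x^*=y^*$. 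The main obstacle is purely bookkeeping: making sure the extra $p(x,y)$ term in the Chatterjea-type right-hand side does not spoil the summability of the coefficient sequence that feeds the $\alpha$-series argument.
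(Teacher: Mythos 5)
Your overall strategy is exactly the paper's: the paper disposes of Theorem \ref{chat2} with the single remark that it ``follows exactly the same steps as the proof of Theorem \ref{main2}'', and Theorem \ref{main2} is in turn declared to be ``merely a copy'' of the proof of Theorem \ref{main}, so the Picard iteration, the (pm4)/arithmetic--geometric-mean Cauchy estimate, the passage to the limit, and the uniqueness computation are all the right template, and you have correctly isolated the one step that is not a verbatim copy, namely absorbing the extra $p(x,y)$ term. However, that step is not ``purely bookkeeping'', and as you leave it the argument has a genuine gap. With $(x,y)=(x_{n-1},x_n)$ the bracket becomes $2p(x_{n-1},x_n)+p(x_n,x_{n+1})$, and homogeneity plus sub-additivity of $F$ give
\[
F(p(x_n,x_{n+1}))\ \le\ \frac{2^{s}\,\delta_{n,n+1}^{s}}{1-\delta_{n,n+1}^{s}}\,F(p(x_{n-1},x_n)),
\]
so your constant is $c=2^{s}$ and the iterated coefficient is $\prod_{i=1}^{n}(2^{s}s_i)=2^{ns}\prod_{i=1}^{n}s_i$. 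The AM--GM step then only yields $\prod_{i=1}^{k}(2^{s}s_i)\le (2^{s}\lambda)^{k}$, and the stated hypothesis that $\sum_i s_i$ with $s_i=\delta_{i,i+1}^{s}/(1-\delta_{i,i+1}^{s})$ is an $\alpha$-series guarantees only $\lambda<1$, not $2^{s}\lambda<1$; since $s>0$, the geometric tail $\sum_{k\ge n}(2^{s}\lambda)^{k}$ may diverge and the Cauchy argument collapses. To close this one must strengthen the hypothesis to require that $\sum_i \frac{2^{s}\delta_{i,i+1}^{s}}{1-\delta_{i,i+1}^{s}}$ be an $\alpha$-series --- which is precisely the normalisation the paper itself adopts in the analogous statement of its concluding section and in the example following Corollary \ref{cor11} --- or otherwise impose $2^{s}\lambda<1$. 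Flagging the issue as an ``obstacle'' without resolving it leaves the proof incomplete at its only genuinely new point.

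A secondary issue you pass over silently: condition \eqref{condchat2} as displayed reads $p(x,T_j(x))+p(y,T_i(y))$, which for the Picard sequence with $(i,j)=(n,n+1)$ produces $p(x_{n-1},T_{n+1}(x_{n-1}))$ and $p(x_n,T_n(x_n))$ --- quantities that do not telescope along $x_n=T_n(x_{n-1})$. You implicitly re-read the condition as the Kannan-type $p(x,T_i(x))+p(y,T_j(y))$ of Theorem \ref{main2} (almost certainly the intended reading, given the cross-terms $p(x,T_j(y)),p(y,T_i(x))$ inside $\psi$ and the paper's reference to Theorem \ref{main2}), but that correction should be stated explicitly rather than assumed. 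The remaining parts of your proposal --- the Cauchy estimate via (pm4), the identification of $x^{*}$ as a common fixed point by letting $n\to\infty$ in $F(p(x_n,T_m(x^{*})))$, and the uniqueness computation using $p(x^{*},x^{*})=p(y^{*},y^{*})=0$ --- are sound and agree with the proof of Theorem \ref{main} that the paper intends to be copied.
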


\begin{proof}
	The proof follows exactly the same steps as the proof of Theorem \ref{main2}
\end{proof}

Like in the case of Theorem \ref{main}, the condition \eqref{chat1} can also be relaxed and we obtain:

\begin{theorem}\label{chat1relaxed}
	Let $(X,p,K)$ be a complete partial metric type space and $\{T_n\}$ be a sequence of self mappings on $X$ such that
	
	\begin{align}\label{condchat1relaxed}
	F(p(T_i(x),T_j(y))) \leq \  & F(\delta_{i,j}[p(x,T_j(y))+ p(y,T_i(x))])\\
	&- F(\gamma_{i,j}\psi[p(x,T_j(y)),p(y,T_i(x))]) \nonumber
	\end{align}
	
	for $x,y\in X$ with $x\neq y,$ $0\leq \delta_{i,j}, \gamma_{i,j}, \ i,j = 1,2,\cdots ,$ for some $F \in \Phi$ homogeneous with degree $s$ and $\psi:[0,\infty)^2 \to [0,\infty)$ is a continuous mapping such that $%
	\psi(x,y)=0$ if and only if $x=y=0$.
	If 
	\begin{itemize}
		\item[i)] for each $j,$ $\underset{i \to \infty}{\limsup}\ \delta_{i,j}^s <1, $
		
		\item[ii)] $$\sum_{n=1}^{\infty} C_n < \infty \text{ where } C_n = \prod\limits_{i=1}^n \frac{\delta_{i,i+1}^s}{1-\delta_{i,i+1}^s}, $$
	\end{itemize}
	
	then $\{T_n\}$ has a unique common fixed point in $X$. 
	
\end{theorem}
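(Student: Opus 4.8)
The plan is to mimic the proof of Theorem \ref{relaxed}, the only genuinely new ingredient being the treatment of the Chatterjea-type cross terms. Fix $x_0\in X$ and form the Picard iterates $x_n:=T_n(x_{n-1})$, $n\geq 1$; as usual we may assume $x_n\neq x_{n+1}$ for all $n$, since otherwise a common fixed point is already at hand. Apply \eqref{condchat1relaxed} with $x=x_{n-1}$, $y=x_n$, $i=n$, $j=n+1$. Since $T_n(x_{n-1})=x_n$ and $T_{n+1}(x_n)=x_{n+1}$, the right-hand side now carries $p(x_{n-1},x_{n+1})$ and $p(x_n,x_n)$; discarding the nonpositive $\gamma$-term and using homogeneity and subadditivity of $F$ gives $F(p(x_n,x_{n+1}))\leq \delta_{n,n+1}^s[F(p(x_{n-1},x_{n+1}))+F(p(x_n,x_n))]$. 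Now (pm2) yields $p(x_n,x_n)\leq p(x_n,x_{n+1})$, and (pm4) with a single intermediate point yields $p(x_{n-1},x_{n+1})\leq K[p(x_{n-1},x_n)+p(x_n,x_{n+1})]$, hence $F(p(x_{n-1},x_{n+1}))\leq K^s[F(p(x_{n-1},x_n))+F(p(x_n,x_{n+1}))]$. Substituting and solving the resulting linear inequality for $F(p(x_n,x_{n+1}))$ produces a contraction-type recursion $F(p(x_n,x_{n+1}))\leq \theta_n F(p(x_{n-1},x_n))$; after absorbing the $K^s$ factors this is brought to the form $F(p(x_n,x_{n+1}))\leq \frac{\delta_{n,n+1}^s}{1-\delta_{n,n+1}^s}F(p(x_{n-1},x_n))$, and iterating gives $F(p(x_n,x_{n+1}))\leq C_n\,F(p(x_0,x_1))$ with $C_n=\prod_{i=1}^n \delta_{i,i+1}^s/(1-\delta_{i,i+1}^s)$, exactly as in Theorem \ref{relaxed}.

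Next, for $p\geq 1$ one applies (pm4) with $p$ intermediate points together with subadditivity and homogeneity of $F$ to obtain $F(p(x_n,x_{n+p}))\leq K^s\sum_{k=n}^{n+p-1}C_k\,F(p(x_0,x_1))$. By hypothesis ii), $\sum_n C_n<\infty$, so the tail $\sum_{k\geq n}C_k\to 0$; since $F$ is continuous, non-decreasing and $F^{-1}(0)=\{0\}$, this forces $p(x_n,x_{n+p})\to 0$ uniformly in $p$. Thus $(x_n)$ is a (in fact $0$-)Cauchy sequence, and completeness of $(X,p,K)$ supplies $x^*\in X$ with $0=\lim_{n,m}p(x_n,x_m)=\lim_n p(x_n,x^*)=p(x^*,x^*)$.

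To see that $x^*$ is a common fixed point, for each $m\geq 1$ apply \eqref{condchat1relaxed} with $x=x_{n-1}$, $y=x^*$, $i=n$, $j=m$; the cross terms are now $p(x_{n-1},T_m(x^*))$ and $p(x^*,x_n)$. Using (pm4) to write $p(x_{n-1},T_m(x^*))\leq K[p(x_{n-1},x^*)+p(x^*,T_m(x^*))]$, and bounding $p(x^*,T_m(x^*))$ from above through $x_n$ in the same way, one then lets $n\to\infty$ — so that $F(p(x_n,x^*))\to 0$ and, by continuity of $F$ and $\psi$, the remaining terms stay controlled — and takes $\limsup$, arriving at an inequality of the shape $F(p(x^*,T_m(x^*)))\leq c_m\,F(p(x^*,T_m(x^*)))$ where $c_m$ is assembled from $K^s$ and $\limsup_i \delta_{i,m}^s$; hypothesis i) makes $c_m<1$, forcing $F(p(x^*,T_m(x^*)))=0$, i.e.\ $T_m(x^*)=x^*$ for every $m$. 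Uniqueness follows by feeding two common fixed points $x^*,y^*$ into \eqref{condchat1relaxed} with a common index $m$: the cross terms collapse to $p(x^*,y^*)$ and the $\psi$-term is nonnegative, so $F(p(x^*,y^*))\leq c\,F(p(x^*,y^*))$ with $c<1$, whence $p(x^*,y^*)=0$ and $x^*=y^*$.

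I expect the main obstacle to be precisely the bookkeeping of the coefficient $K$ (and of the homogeneity degree $s$) through the two places where (pm4) is invoked to tame the Chatterjea cross terms — once in the Cauchy estimate, once in the fixed-point step — and in particular the verification that the resulting self-referential inequalities carry a contraction constant strictly below $1$. That is where hypotheses i) and ii) genuinely enter, and it is the only point at which the argument departs in substance from the proof of Theorem \ref{relaxed}.
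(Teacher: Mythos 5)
The paper itself offers no argument for Theorem \ref{chat1relaxed} --- it is stated by bare analogy with Theorem \ref{relaxed} --- so your route (rerun the Picard iteration and tame the Chatterjea cross terms with (pm4)) is the intended one, and you have correctly located the one place where the analogy is not automatic. Unfortunately, that is exactly where your argument has a genuine gap. From \eqref{condchat1relaxed} with $x=x_{n-1}$, $y=x_n$, $i=n$, $j=n+1$ you arrive (correctly) at
\[
F(p(x_n,x_{n+1}))\le \delta_{n,n+1}^s\bigl[F(p(x_{n-1},x_{n+1}))+F(p(x_n,x_n))\bigr],
\]
and after invoking (pm4) the self-referential inequality you can actually solve is
\[
\bigl(1-K^s\delta_{n,n+1}^s\bigr)\,F(p(x_n,x_{n+1}))\le K^s\delta_{n,n+1}^s\,F(p(x_{n-1},x_n))
\]
(or a still less favorable variant if, as you do, $F(p(x_n,x_n))$ is estimated separately via (pm2)). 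Two problems follow. First, this requires $K^s\delta_{n,n+1}^s<1$, which is not among the hypotheses: the theorem assumes only $\delta_{i,j}\ge 0$ together with i) and ii). Second, even when the denominator is positive, the recursion constant is $\frac{K^s\delta_{n,n+1}^s}{1-K^s\delta_{n,n+1}^s}$, not $\frac{\delta_{n,n+1}^s}{1-\delta_{n,n+1}^s}$; the factor $K^s$ sits next to $\delta^s$ inside a fraction whose denominator also contains $\delta^s$, so it cannot be ``absorbed'' as a harmless multiplicative constant in front of $C_n$. Concretely, with $s=1$, $K=2$ and $\delta_{i,i+1}\equiv 0.4$ one has $C_n=(2/3)^n$, so ii) holds, yet the true recursion constant is $\frac{0.8}{0.2}=4$ and the iterated bound is $4^n F(p(x_0,x_1))$, which is useless for the Cauchy estimate. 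The same contamination by $K^s$ reappears in your fixed-point step: the constant $c_m$ you produce is of the order $K^s\limsup_i\delta_{i,m}^s$, and hypothesis i) only guarantees $\limsup_i\delta_{i,m}^s<1$, not $c_m<1$.

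In short, the proposal works verbatim only when $K=1$, where $\frac{K^s\delta^s}{1-K^s\delta^s}=\frac{\delta^s}{1-\delta^s}$ and the analogy with Theorem \ref{relaxed} is exact. For $K>1$ you would need either strengthened hypotheses (e.g.\ replacing $\delta_{i,i+1}^s$ by $K^s\delta_{i,i+1}^s$ in the definition of $C_n$ and in condition i)) or a different way of handling the cross term $p(x_{n-1},T_{n+1}(x_n))$ that avoids (pm4). As written, the step ``after absorbing the $K^s$ factors this is brought to the form $F(p(x_n,x_{n+1}))\le \frac{\delta_{n,n+1}^s}{1-\delta_{n,n+1}^s}F(p(x_{n-1},x_n))$'' is an assertion, not a deduction, and it is false in general; this is arguably a defect of the theorem as stated rather than only of your proof, but the proof as proposed does not close it.
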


Also, it is not necessary to point out that this relaxed condition can be applied to modify the hypotheses of Theorem \ref{chat2}.

\begin{corollary}
	In addition to hypotheses of Theorem \ref{chat1relaxed},
	suppose that for every $n \geq 1$, there exists $k(n) \geq 1 $ such
	that $\delta_{n,k(n)}< \frac{1}{2}$; then every $T_n$ has a unique fixed point in $X$.
\end{corollary}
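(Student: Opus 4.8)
The plan is to exploit the common fixed point already produced by Theorem~\ref{chat1relaxed}. That theorem gives a \emph{unique} common fixed point $x^*\in X$ of the whole family, so in particular $T_nx^*=x^*$ for every $n\ge 1$; hence $x^*$ is a fixed point of each individual $T_n$, and the only thing left to prove is that $T_n$ has no other fixed point.

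I would fix $n\ge 1$, let $k=k(n)$ be the index furnished by the extra hypothesis (so $\delta_{n,k}<\tfrac12$), and suppose $u\in X$ is any fixed point of $T_n$. Arguing by contradiction I assume $u\neq x^*$ --- this is exactly what makes the contractive inequality \eqref{condchat1relaxed} applicable, since it is only postulated for $x\neq y$. Applying \eqref{condchat1relaxed} with $i=n$, $j=k$, $x=u$, $y=x^*$ and using $T_nu=u$, $T_kx^*=x^*$ together with symmetry (pm3), both mixed terms $p(x,T_j(y))=p(u,x^*)$ and $p(y,T_i(x))=p(x^*,u)$ collapse to $p(u,x^*)$, giving
\[
F\big(p(u,x^*)\big)\le F\big(2\delta_{n,k}\,p(u,x^*)\big)-F\big(\gamma_{n,k}\,\psi[p(u,x^*),p(u,x^*)]\big).
\]
Discarding the non-negative subtracted term and using that $F$ is homogeneous of degree $s$ then yields $F(p(u,x^*))\le(2\delta_{n,k})^{s}F(p(u,x^*))$ with $(2\delta_{n,k})^{s}<1$ because $\delta_{n,k}<\tfrac12$; hence $F(p(u,x^*))=0$, so $p(u,x^*)=0$ since $F^{-1}(0)=\{0\}$, and then $u=x^*$ by the Remark following the definition of a $K$-PMS --- a contradiction. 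Thus $x^*$ is the unique fixed point of $T_n$, and $n$ being arbitrary finishes the argument.

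I do not expect a genuine obstacle here: the corollary is a direct specialisation of the contraction hypothesis to the two points ``a candidate fixed point of $T_n$'' and ``the already known common fixed point''. The only delicate points are bookkeeping ones --- invoking \eqref{condchat1relaxed} only in the case $u\neq x^*$ (handled by the contradiction set-up), and recording that the homogeneity degree $s$ is positive so that $2\delta_{n,k}<1$ really forces $(2\delta_{n,k})^{s}<1$, which is implicit in the class $\Phi$ since a degree-zero homogeneous $F$ would be constant on $(0,\infty)$. Should one wish to avoid relying on $s>0$, the subcase $\gamma_{n,k}>0$ can alternatively be closed by using monotonicity of $F$ and the property $\psi(t,t)=0\iff t=0$.
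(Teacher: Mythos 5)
Your proof is correct and follows essentially the same route as the paper: apply the contractive condition \eqref{condchat1relaxed} to the pair consisting of the common fixed point $x^*$ and a candidate fixed point of $T_n$, note that both Chatterjea terms collapse to $p(u,x^*)$, and use $\delta_{n,k(n)}<\tfrac12$ to force $p(u,x^*)=0$. In fact you are more careful than the paper, which silently drops $F$ and the $\gamma$-term; your explicit use of homogeneity of degree $s>0$ and of the nonnegativity of the subtracted term is exactly the bookkeeping needed to make that step rigorous.
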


\begin{remark}
	This corollary emphasizes on the subtle fact that in addition to the existence of a unique common fixed for the family $\{T_n\}$, each single map $T_n$ has a unique fixed point, which happens to be the common one. 
\end{remark}

\begin{proof}
	From Theorem \ref{chat1relaxed}, we know that the family $\{T_n\}$ has a
	unique common fixed point $x^* \in X$. If $y^*$ is a fixed
	point of a given $T_m$ then
	\begin{align*}
	p(x^*,y^*) = p(T_{k(m)}x^*,T_my^*) & \leq \delta_{k(m),m}[p(x^*,T_my^*)+ p(y^*,T_{k(m)}x^*)] \\
	& = \delta_{k(m),m}[p(x^*,y^*)+ p(y^*,x^*)] \\
	& < p(x^*,y^*),
	\end{align*}
	
	which implies $p(x^*,y^*) =0 $; which gives
	the desired result.
	
\end{proof}

\begin{example}
	We endow the set $X=[0,1]$ with the partial metric type $p(x,y)= |x-y|$ for $x,y,\in X$. Then $(X,p,1)$ is a complete partial metric type space. We set $F(x)=x$ whenever $x\in X$, then $F \in \Phi$ homogeneous with degree 1. For $n\geq$ we define the family $\{T_n\}$ of self-mappings on $X$ by
	
	\[T_n= \left\{
	\begin{array}{ll}
	1 & : 0 < x \leq 1,\\
	\frac{2}{3} + \frac{1}{n+2} & : x = 0.
	\end{array}
	\right.
	\]
	
	Using the notation of Theorem \ref{chat1relaxed}, we use the family of reals 
	$$\delta_{i,j} = \frac{1}{3} + \frac{1}{|i-j|+6};\qquad \gamma_{i,j}=0.$$
	
	Then clearly for each $j,$ 
	$$\underset{i \to \infty}{\limsup}\ \delta_{i,j}^s <1, $$ 
	
	and

	$$C_n = \prod \limits_{i=1}^n \frac{\delta_{i,i+1}^s}{1-\delta_{i,i+1}^s} =  \left(\frac{10}{11}\right)^n ,$$
	which is a convergent series.
	
	Also it is a straightforward calculation to verify that 
	
	\begin{align}
	F(p(T_i(x),T_j(y))) \leq \  & F(\delta_{i,j}[p(x,T_j(y))+ p(y,T_i(x))])\\
	&- F(\gamma_{i,j}\psi[p(x,T_j(y)),p(y,T_i(x))]) \nonumber
	\end{align}
	
	i.e. equivalently 
	
	\begin{align}
	p(T_i(x),T_j(y))\leq \  & \delta_{i,j}[p(x,T_j(y))+ p(y,T_i(x))].
	\end{align}
	
	One just has to consider the three possible cases: 
	
	\begin{itemize}
		\item $x\in (0,1]$ and $y\in (0,1]$;
		\item $x\in (0,1]$ and $y=0$;
		\item $x=y=0$ with $i\neq j$.
	\end{itemize}
	
	So all the conditions of Theorem \ref{chat1relaxed} are satisfied and note that $x = 1$ is the only fixed point for the $T_n$'s.
\end{example}

\section{Conclusion and future work}
From the above results, it is very clear that $K$-PMS represent quite a large class of ``metric like" structures and such setting can be made even larger if we look at the work by Choi \cite{choi} and Gaba\cite{gabs}.

Using the same idea as in the proofs of Theorem \ref{main} and Theorem \ref{main2}, one can establish the
following results.

\begin{theorem}
	
	Let $(X,D,K)$ be a complete metric type space and $\{T_n\}$ be a sequence of self mappings on $X$. Assume that there exist two sequences $(a_n)$ and $(b_n)$ of elements of $X$ such 
	
	\begin{align}
	F(p(T_i^r(x),T_j^r(y))) \leq \  & F(\delta_{i,j}[p(x,T_i^r(x))+ p(y,T_j^r(y))])\\
	&- F(\gamma_{i,j}\psi[p(x,T_i^r(x)),p(y,T_j^r(y))]) \nonumber
	\end{align}
	
	for $x,y\in X$ with $x\neq y,$ $r\geq 1,$ $0\leq \delta_{i,j},\gamma_{i,j}<1 , \ i,j = 1,2,\cdots ,$ and for some $F \in \Phi$ homogeneous with degree $s$,
	 where 
	 $\psi:[0,\infty)^2 \to [0,\infty)$ is a continuous mapping such that 
	$\psi(x,y)=0$ if and only if $x=y=0$. 
	then $\{T_n\}$ has a unique common fixed point in $X$.

	If the series $\sum_{i=1}^{\infty}s_i$ where $s_i=\frac{\delta_{i,i+1}^s }{1-\delta_{i,i+1}^s}$ is an 
	$\alpha$-series
	then $\{T_n\}$ has a unique common fixed point in $X$.
\end{theorem}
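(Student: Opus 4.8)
The plan is to mimic, essentially verbatim, the argument of Theorem \ref{main}, keeping careful track of the fact that the contractive iterate is now $T_n^r$ rather than $T_n$. First I would fix an arbitrary $x_0\in X$ and build the Picard-type sequence $x_n = T_n^r(x_{n-1})$ for $n\ge 1$ (with the harmless convention that we may assume $x_n\neq x_{n+1}$ for all $n$, else we are already done). Applying \eqref{condition22}-style inequality with $x=x_{n-1}$, $y=x_n$, $i=n$, $j=n+1$, and using the homogeneity of degree $s$ together with the sub-additivity of $F$ exactly as in the proof of Theorem \ref{main}, I would obtain
\[
F(p(x_n,x_{n+1})) \leq \left(\frac{\delta_{n,n+1}^s}{1-\delta_{n,n+1}^s}\right) F(p(x_{n-1},x_n)),
\]
and hence by iteration $F(p(x_n,x_{n+1})) \leq \prod_{i=1}^n \big(\tfrac{\delta_{i,i+1}^s}{1-\delta_{i,i+1}^s}\big) F(p(x_0,x_1))$.

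Next, using axiom (pm4) and the properties of $F$ (continuity, monotonicity, sub-additivity, homogeneity) I would pass to a Cauchy estimate for $F(p(x_n,x_{n+p}))$: it is bounded by $K^s \sum_{k=n}^{n+p-1}\prod_{i=1}^{k}\big(\tfrac{\delta_{i,i+1}^s}{1-\delta_{i,i+1}^s}\big) F(p(x_0,x_1))$, and then the $\alpha$-series hypothesis on $\sum s_i$ together with the arithmetic–geometric mean inequality (just as in Theorem \ref{main}) forces this tail to be dominated by $K^s \tfrac{\lambda^n}{1-\lambda} F(p(x_0,x_1)) \to 0$. Since $F^{-1}(0)=\{0\}$ and $F$ is continuous, $p(x_n,x_{n+p})\to 0$, so $(x_n)$ is Cauchy and, by completeness, $x_n\to x^*$ with $p(x^*,x^*)=0$. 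For the fixed point step I would estimate $F(p(x_n,T_m^r(x^*)))=F(p(T_n^r(x_{n-1}),T_m^r(x^*)))$ via the contractive condition, let $n\to\infty$ using $p(x_{n-1},x_n)\to 0$ and $p(x^*,x^*)=0$, and conclude $F(p(x^*,T_m^r(x^*)))\le \delta_{n,m}^s F(p(x^*,T_m^r(x^*)))$, whence $T_m^r(x^*)=x^*$ for every $m$. That only gives a common fixed point of the iterates $\{T_m^r\}$; to upgrade to $T_m(x^*)=x^*$ I would note that $T_m(x^*) = T_m(T_m^r(x^*)) = T_m^r(T_m(x^*))$, so $T_m(x^*)$ is also a fixed point of $T_m^r$, and the uniqueness of the fixed point of $\{T_m^r\}$ (proved exactly as the uniqueness in Theorem \ref{main}, using $\psi[0,0]=0$) then yields $T_m(x^*)=x^*$. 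Uniqueness of the common fixed point of $\{T_n\}$ follows from the same computation.

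The main obstacle — and the only place where the statement as printed is genuinely delicate — is precisely the passage from "common fixed point of the iterates $T_n^r$" to "common fixed point of the $T_n$ themselves." This requires the commutation identity $T_m\circ T_m^r = T_m^r\circ T_m$ (which is automatic for powers of a single map) together with the uniqueness of the $\{T_n^r\}$-common fixed point; without uniqueness the argument would fail, so I would make sure to prove uniqueness of the $\{T_n^r\}$-fixed point first and only then deduce the statement about the $T_n$. The rest of the proof is a routine transcription of Theorems \ref{main} and \ref{main2}, and indeed, as the paper does for Theorem \ref{main2}, one could legitimately just remark that the proof is a copy of the earlier ones with $T_n$ replaced by $T_n^r$ throughout and the extra commutation observation appended at the end.
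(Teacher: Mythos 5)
Your proposal is correct and follows exactly the route the paper intends: the paper gives no proof of this theorem at all, only the remark that ``the same idea as in the proofs of Theorem \ref{main} and Theorem \ref{main2}'' applies, so the Picard-iteration, Cauchy-estimate and limit steps you transcribe are precisely the omitted argument. The one place where you genuinely add content is the passage from a common fixed point of the iterates $T_m^r$ to a common fixed point of the $T_m$ themselves, via $T_m(x^*)=T_m(T_m^r(x^*))=T_m^r(T_m(x^*))$ plus uniqueness; this is the step the paper silently skips, and you are right that without it the theorem as stated is not proved. Two small refinements are needed to make that step airtight: (i) what you need is uniqueness of the fixed point of the \emph{single} map $T_m^r$, not merely of the common fixed point of the family, so you must apply the contractive condition with $i=j=m$ (which the hypothesis $0\leq\delta_{i,j}<1$ for all $i,j$ permits); and (ii) since $p$ is only a partial metric type, the self-distance $p(T_m(x^*),T_m(x^*))$ need not vanish a priori, so in the uniqueness computation you should invoke (pm2) to bound it by $p(x^*,T_m(x^*))$ before absorbing it into the left-hand side; together with $p(x^*,x^*)=0$ and $\delta_{m,m}^s<1$ this closes the estimate and gives $p(x^*,T_m(x^*))=0$, hence $T_m(x^*)=x^*$.
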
	

\begin{theorem}
	Let $(X,p,K)$ be a complete metric type space and $\{T_n\}$ be a sequence of self mappings on $X$ such that 
	\begin{align}
	F(p(T_i^r(x),T_j^r(y))) \leq \  & F(\delta_{i,j}[p(x,T_i^r(x))+ p(y,T_j^r(y))+p(x,y)])\\
	&- F(\gamma_{i,j}\psi[p(x,T_i^r(x)),p(y,T_j^r(y)),p(x,y)]) \nonumber
	\end{align}
	for $x,y\in X$ with $x\neq y,$ $r\geq 1,$ $0\leq \delta_{i,j},\gamma_{i,j}<1 , \ i,j = 1,2,\cdots ,$ and for some $F \in \Phi$ homogeneous with degree $s$, 
	 $\psi:[0,\infty)^3 \to [0,\infty)$ is a continuous mapping such that $\psi(x,y,z)=0$ if and only if $x=y=z=0$. 
	
	If the series $\sum_{i=1}^{\infty}s_i$ where $s_i=\frac{2^s\delta_{i,i+1}^s }{1-\delta_{i,i+1}^s}$ is an 
$\alpha$-series then $\{T_n\}$ has a unique common fixed point in $X$. 
	\end{theorem}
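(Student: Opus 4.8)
The plan is to transcribe the proof of Theorem~\ref{main2} with every occurrence of $T_n$ replaced by the $r$-th iterate $T_n^r$, and then to add one short step recovering the maps $T_n$ themselves. Fix an arbitrary $x_0\in X$ and set $x_n:=T_n^r(x_{n-1})$ for $n\ge 1$; as in the proof of Theorem~\ref{fix1} we may assume $x_n\neq x_{n+1}$ for all $n$, since otherwise the proof is complete. Feeding $x=x_{n-1}$, $y=x_n$ and the indices $i=n$, $j=n+1$ into the contractive hypothesis, and using $p(x_{n-1},T_n^r x_{n-1})=p(x_{n-1},x_n)$ and $p(x_n,T_{n+1}^r x_n)=p(x_n,x_{n+1})$, gives
\[
F(p(x_n,x_{n+1}))\le F\bigl(\delta_{n,n+1}\,[\,2\,p(x_{n-1},x_n)+p(x_n,x_{n+1})\,]\bigr)-F\bigl(\gamma_{n,n+1}\,\psi[\cdots]\bigr).
\]
Since $F\in\Phi$ is homogeneous of degree $s$ and sub-additive, $F\bigl(\delta(2a+b)\bigr)\le 2^s\delta^s F(a)+\delta^s F(b)$; discarding the non-negative $\psi$-term and solving for $F(p(x_n,x_{n+1}))$ yields
\[
F(p(x_n,x_{n+1}))\le \frac{2^s\delta_{n,n+1}^s}{1-\delta_{n,n+1}^s}\,F(p(x_{n-1},x_n))=s_n\,F(p(x_{n-1},x_n)),
\]
which is precisely where the factor $2^s$ in the definition of $s_i$ originates: it is forced by the extra summand $p(x,y)$ in the present contraction, absent in Theorem~\ref{main}. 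Iterating, $F(p(x_n,x_{n+1}))\le\bigl(\prod_{i=1}^n s_i\bigr)F(p(x_0,x_1))$.

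For $p\ge 1$ I would then invoke axiom (pm4) of the $K$-PMS together with the monotonicity, sub-additivity and degree-$s$ homogeneity of $F$ to obtain
\[
F(p(x_n,x_{n+p}))\le K^s\sum_{k=n}^{n+p-1}F(p(x_k,x_{k+1}))\le K^s\Bigl(\sum_{k=n}^{n+p-1}\prod_{i=1}^k s_i\Bigr)F(p(x_0,x_1)),
\]
bound $\prod_{i=1}^k s_i\le\bigl(\tfrac1k\sum_{i=1}^k s_i\bigr)^k$ by the arithmetic--geometric mean inequality, and use the $\alpha$-series hypothesis (with $\lambda$ and $n(\lambda)$ as in Definition~\ref{def1}) to dominate the right-hand side, for $n\ge n(\lambda)$, by $K^s\tfrac{\lambda^n}{1-\lambda}F(p(x_0,x_1))$. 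Letting $n\to\infty$ and using that $F$ is continuous with $F^{-1}(0)=\{0\}$ forces $p(x_n,x_{n+p})\to 0$, so $(x_n)$ is Cauchy; by completeness it converges to some $x^*$ with $p(x^*,x^*)=\lim_{n\to\infty}p(x_n,x^*)=\lim_{n,m\to\infty}p(x_n,x_m)=0$. Substituting $x=x_{n-1}$, $y=x^*$, $i=n$, $j=m$ in the hypothesis and letting $n\to\infty$ (so $p(x_{n-1},x_n)\to 0$ and $p(x_{n-1},x^*)\to 0$) gives $F(p(x^*,T_m^r x^*))\le\delta_{n,m}^s F(p(x^*,T_m^r x^*))$ with $\delta_{n,m}^s<1$, hence $T_m^r(x^*)=x^*$ for every $m$; the uniqueness of the common fixed point of $\{T_n^r\}$ follows exactly as in Theorem~\ref{main2}.

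It remains to pass from $\{T_n^r\}$ to $\{T_n\}$. If $r=1$ there is nothing to do. For $r\ge 2$, since $T_m$ commutes with its own iterate, $T_m^r(T_m x^*)=T_m(T_m^r x^*)=T_m x^*$, so $T_m x^*$ — and with it every point of the finite orbit $\{x^*,T_m x^*,\dots,T_m^{r-1}x^*\}$ — is again a fixed point of $T_m^r$. Inserting $x=x^*$, $y=T_m x^*$ and $i=j=m$ into the hypothesis and using $p(x^*,x^*)=0$ together with axiom (pm2) (which bounds $p(T_m x^*,T_m x^*)$ by $p(x^*,T_m x^*)$), one is reduced to forcing $p(x^*,T_m x^*)=0$ from the resulting self-referential estimate; this then gives $T_m x^*=x^*$ for every $m$, so that $x^*$ is a common fixed point of $\{T_n\}$, its uniqueness being inherited from the $\{T_n^r\}$ case.

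I expect this last step to be the real obstacle. In Theorems~\ref{theorem1} and \ref{main}--\ref{main2} the common fixed point is produced directly for the family of maps, whereas here it is first obtained only for the iterates $T_n^r$ and must then be moved through $T_m$; because the self-distance $p(T_m x^*,T_m x^*)$ need not vanish in a partial metric type space, the naive substitution is self-referential with a constant that, owing to the $+p(x,y)$ summand, is a multiple of $\delta_{m,m}^s$ rather than $\delta_{m,m}^s$ itself, and hence is not automatically less than one. Closing it cleanly calls for an honest use of the auxiliary function $\psi$, for propagating the contractive estimate around the whole $T_m$-orbit of $x^*$, or — most economically — for the mild extra hypothesis $\delta_{m,m}^s\le\tfrac12$. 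Everything before this step is a line-by-line adaptation of the proof of Theorem~\ref{main2}.
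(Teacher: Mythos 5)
The paper gives no proof of this theorem: it appears in the concluding section with only the remark that it follows ``using the same idea as in the proofs of Theorem \ref{main} and Theorem \ref{main2}.'' Your transcription of that idea for the iterated maps is correct and is clearly the intended argument: setting $x_n=T_n^r(x_{n-1})$, the computation $F(\delta(2a+b))\le 2^s\delta^sF(a)+\delta^sF(b)$ is exactly right and correctly explains why the coefficient in $s_i$ is $\tfrac{2^s\delta_{i,i+1}^s}{1-\delta_{i,i+1}^s}$ rather than $\tfrac{\delta_{i,i+1}^s}{1-\delta_{i,i+1}^s}$; the (pm4)/AM--GM/$\alpha$-series chain and the passage to the limit reproduce the paper's Theorem \ref{main} argument verbatim and yield that $x^*$ is the unique \emph{common} fixed point of the family $\{T_n^r\}$.

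The gap you flag in the last step is genuine, and the paper does nothing to address it. The standard trick for a single map ($T^rx^*=x^*$ implies $Tx^*$ is a fixed point of $T^r$, hence equals $x^*$ by uniqueness) is unavailable here because uniqueness has only been established for the \emph{common} fixed point of the whole family $\{T_n^r\}$: the point $T_mx^*$ is fixed by $T_m^r$ but there is no reason for it to be fixed by $T_n^r$ for $n\neq m$, so the uniqueness statement cannot be invoked. The direct substitution $x=x^*$, $y=T_mx^*$, $i=j=m$ gives, after using $p(x^*,x^*)=0$ and (pm2) to bound $p(T_mx^*,T_mx^*)$ by $p(x^*,T_mx^*)$, the self-referential inequality $F(p(x^*,T_mx^*))\le 2^s\delta_{m,m}^s\,F(p(x^*,T_mx^*))-F(\gamma_{m,m}\psi[\cdots])$, whose coefficient $2^s\delta_{m,m}^s$ is not forced below $1$ by the hypotheses (only $\delta_{m,m}<1$ is assumed, and $\delta_{m,m}$ does not even enter the $\alpha$-series condition, which involves only $\delta_{i,i+1}$). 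So as written your argument, like the paper's assertion, proves the theorem only for $\{T_n^r\}$ in place of $\{T_n\}$; to get the stated conclusion one needs an additional hypothesis such as $2^s\delta_{n,n}^s\le 1$ for all $n$ (combined with $\gamma_{n,n}>0$ to break the equality case via $\psi$), or the conclusion should be weakened accordingly. Your diagnosis of where the proof breaks and what would repair it is accurate.
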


Moreover, the above two generalisations also apply to Theorem \ref{chat1} and Theorem \ref{chat2} respectively.	

\vspace*{0.5cm}	

Recently the so-called $C$-class functions, which was introduced by A. H. Ansari \cite{aha} in 2014 and covers a large class of contractive conditions, has been applied successfully in the generalization of certain contractive conditions.

We read
\begin{definition}
	\label{C-class}(\cite{aha}) A continuous function $f:[0,\infty
	)^{2}\rightarrow \mathbb{R}$ is called \textit{$C$-class} function if \ for
	any $s,t\in \lbrack 0,\infty ),$ the following conditions hold:
	
	(1) $f(s,t)\leq s$;
	
	(2) $f(s,t)=s$ implies that either $s=0$ or $t=0$.
	
	We shall denote by $\mathcal{C}$ the collection of $C$-class functions.	
\end{definition}

\begin{example}
	\label{C-class examp}(\cite{aha}) The following examples show that the class $%
	\mathcal{C}$ is nonempty:

	\begin{enumerate}
		\item $f(s,t)=s-t.$
		
		\item $f(s,t)=ms,$ for some $m\in (0,1).$
		
		\item $f(s,t)=\frac{s}{(1+t)^{r}}$ for some $r\in (0,\infty ).$
		
		\item $f(s,t)=\log (t+a^{s})/(1+t)$, for some $a>1.$
		
	\end{enumerate}

\end{example}	

Therefore, the authors plan to study, in another manuscript, the existence of common fixed points for a family of mappings $T_i: (X,p,K)\to (X,p,K), i=1,2,\cdots$, defined on a partial metric type space $(X,p,K)$, which satisfy :

\begin{align}\label{aha}
F(p(T_i(x),T_j(y))) \leq \  & f(F(\delta_{i,j}[p(x,T_i(x))+ p(y,T_j(y))+p(x,y)]),\\
& F(\gamma_{i,j}\psi[p(x,T_i(x)),p(y,T_j(y)),p(x,y)])) \nonumber
\end{align}
for $x,y\in X$ with $x\neq y,$ where 

\begin{enumerate}
	
	\item $f \in \mathcal{C}$,
	\item  
	$0\leq \delta_{i,j},\gamma_{i,j}<1 , \ i,j = 1,2,\cdots ,$
	\item $F \in \Phi$ homogeneous with degree $s$
	
	\item $\psi:[0,\infty)^3 \to [0,\infty)$ is a continuous mapping such that $%
	\psi(x,y,z)=0$ if and only if $x=y=z=0$.
\end{enumerate}

under the condition that the series $\sum_{i=1}^{\infty}s_i$ where $s_i=\frac{2^s\delta_{i,i+1}^s }{1-\delta_{i,i+1}^s}$ is an 
$\alpha$-series.



{\bf Conflict of interest.}

The author declares that there is no conflict
of interests regarding the publication of this article.

{\bf Acknowledgments}

This work was carried out with financial support from the government of Canada’s International Development Research Centre (IDRC) and within the framework of the AIMS Research for Africa Project.

\bibliographystyle{amsplain}

\end{document}